\documentclass{article}

\usepackage[backref=page]{hyperref}
\usepackage[bibtex-style]{amsrefs}
\usepackage{amsmath, amsthm, amssymb, amsfonts}
\usepackage[utf8]{inputenc}
\usepackage[T1]{fontenc}
\usepackage{bm}
\usepackage{tikz}
\usepackage[capitalize]{cleveref}
\usetikzlibrary{calc}
\usetikzlibrary{decorations}
\usetikzlibrary{decorations.pathreplacing}
\usetikzlibrary{shapes}
\usetikzlibrary{arrows, automata, decorations.pathreplacing, fit, matrix, patterns, positioning}
\usepackage{tikz-qtree}
\usepackage{xifthen} % For if-then control (used in a Tikz macro).
\usepackage{xspace}
\usepackage{bbold}
\usepackage[mathscr]{euscript}
\usepackage{enumitem}
\usepackage{todonotes}
\usepackage{pgf}
%\pgfmathsetseed{\number\pdfrandomseed}

\setlength{\parindent}{0pt}
\setlength{\parskip}{1ex}

\usepackage{lineno}
%\linenumbers

%\usepackage{color}
%\definecolor{mod}{rgb}{.1,.0,.4}
\hypersetup{colorlinks=true, linkcolor=blue, urlcolor=blue, citecolor=blue, 
pdfpagemode=UseNone, pdfstartview=} %colored links, no rectangles, no bookmarks, zoom

% Points of absolute size (to be used when drawing matchings and permutations):

\newcommand\absdot[2]{
	% Make a dot of fixed absolute size.
	\node at #1 {\normalsize $\bullet$};
	\node at #1 [below] {$#2$};
}

% Plotting permutations. Recommended scale: 0.25 (and possibly up to 0.5).

\newcommand{\plotperm}[1]{
	\foreach \j [count=\i] in {#1} {
		\absdot{(\i,\j)}{};
	};
}

\newcommand{\plotpermgraph}[1]{
	\foreach \j [count=\i] in {#1} {
		\foreach \b [count=\a] in {#1} {
			% Draw edge from (a,b) to (i,j) if they form an inversion.
			\ifthenelse{\a<\i \AND \b>\j}{\draw (\a,\b)--(\i,\j);}{}
		};
	};
	\plotperm{#1};
}

% Dots on the intersections

\newcommand{\suf}{\leftarrow} %marks suffix notations  

\newcommand{\A}{{\mathcal A}}

\newcommand{\C}{{\mathcal C}}

\renewcommand{\L}{{\mathcal L}}
\newcommand{\coL}{\L^{\suf}} % {\reflectbox{$\L$}}

\renewcommand{\P}{{\mathcal P}}
\renewcommand{\S}{{\mathcal S}}

\newcommand{\U}{{\mathcal U}}
%\newcommand{\X}{{\mathcal X}}
%\newcommand{\Y}{{\mathcal Y}}
 % Sub inc-osc
 % Separables

\renewcommand{\a}{\mathsf{a}} % Letter of inc-osc alphabet
\renewcommand{\b}{\mathsf{b}} % Letter of inc-osc alphabet
 % Letter of inc-osc alphabet
 % Letter of inc-osc alphabet

\newcommand{\bbP}{\mathbb{P}} %probability
\newcommand{\bbE}{\mathbb{E}} %expectation
\newcommand{\bbI}{\mathbb{1}} %indicator function

\newcommand{\cO}{\mathcal{O}} %asymptotic notation

\DeclareMathOperator{\weight}{wt}

\newtheorem{theorem}{Theorem}[section]
\newtheorem{corollary}[theorem]{Corollary}
\newtheorem{lemma}[theorem]{Lemma}

\newtheorem{proposition}[theorem]{Proposition}
\newtheorem{fact}[theorem]{Fact}

\theoremstyle{definition}
\newtheorem{definition}[theorem]{Definition}
\newtheorem{observation}[theorem]{Observation}
\newtheorem{question}[theorem]{Question}

\newcommand{\Av}{\operatorname{Av}}

\newcommand{\first}{\operatorname{first}}
\newcommand{\last}{\operatorname{last}}

\newcommand{\we}{\equiv}
\newcommand{\wwe}{\equiv}

\renewcommand{\leq}{\leqslant}
%\newcommand{\iff}{\Leftrightarrow}}

 % For "undefined"

\title{Wilf collapse in permutation classes}
\author{Michael Albert\thanks{University of Otago, Dunedin, New Zealand.} \and V\'{i}t 
Jel\'{i}nek\thanks{Computer Science Institute, Charles University, Prague, Czechia. Supported 
by project Impuls of the Neuron Fund for
Support of Science, by project SVV–2017–260452, and by project 18-19158S of the Czech Science 
Foundation.} \and Michal 
Opler\footnotemark[2]}

\begin{document}
\maketitle

\begin{abstract}
For a hereditary permutation class $\C$, we say that two permutations $\pi$ and $\sigma$ of $\C$ 
are \emph{Wilf-equivalent in $\C$}, if $\C$ has the same number of permutations avoiding $\pi$ as 
those avoiding~$\sigma$. We say that a permutation class $\C$ exhibits a \emph{Wilf collapse} if 
the number of permutations of size $n$ in~$\C$ is asymptotically larger than the number of 
Wilf-equivalence classes formed by these permutations.

Previously, only a few specific examples of classes were known to exhibit Wilf collapse. In this 
paper, we show that Wilf collapse is a surprisingly common phenomenon. Among other results, we show 
that Wilf collapse occurs in any permutation class with unbounded growth and finitely many 
sum-indecomposable permutations. 

Our proofs are based on encoding the elements of a permutation class $\C$ as words 
%from a regular language, 
and analyzing the structure of a random permutation in $\C$ using this 
representation.
\end{abstract}

\section{Introduction}

Given a collection, $\C$, of finite structures one associates with it the \emph{growth function} 
$n\mapsto c_n$ where $c_n$ is the number of structures in $\C$ of size $n$. There seems to be no 
generally accepted word for the concept of ``two classes having the same growth function'' -- we 
have decided to say that such classes are \emph{rank-equinumerous}. In the study of permutation 
classes (exact definitions follow in the next section) much attention has been paid to examples of 
rank-equinumerosity, perhaps the most famous being that the collection of permutations that do not 
contain the permutation $231$ as a subpermutation, and the class of those that do not contain $321$ 
are rank-equinumerous. While such equivalences are interesting they are perhaps not too surprising 
given that simple questions tend to have simple answers and there are only so many simple answers to 
go around.

We are concerned with a special sort of rank-equinumerosity. This arises when we begin with a 
universe, $\U$, of finite structures carrying a containment relation denoted~$\leq$. The collections 
we then consider are down-sets in $\U$, i.e., subcollections of $\U$ closed under containment 
(sometimes called \emph{hereditary} subsets of~$\U$). Even more specifically, we consider only those 
down-sets that are defined by the \emph{avoidance} of a single structure $A$, i.e., they consist of 
all the elements of $\U$ that do not contain~$A$. We then say that $A$ and $B$ are 
\emph{Wilf-equivalent} (in $\U$ if the context is not clear), if the down-set of structures 
avoiding $A$ is rank-equinumerous to the down-set of structures avoiding~$B$. For example, the 
rank-equinumerosity mentioned in the previous paragraph arises then when $\U$ is the set of all 
permutations, $A = 231$ and $B = 321$.

We will also say that $\U$ exhibits a \emph{Wilf collapse} if the number of Wilf-equivalence classes 
on structures of size $n$ is small when compared to the total number of structures of size $n$, 
i.e., the average size of a Wilf class tends to infinity as $n$ grows. We further say that $\U$ 
exhibits an \emph{exponential Wilf collapse} if the average size of a Wilf class is exponential 
in~$n$.

While there have been many previous investigations that deal with specific examples of Wilf 
equivalence, or even a few general groups of Wilf-equivalent structures, there has been relatively 
little attention paid to the phenomenon of Wilf collapse. In \cite{AlbertBouvel} it was demonstrated 
that the universe of 312-avoiding permutations exhibits a Wilf collapse, and in \cite{AlbertLi} that 
every permutation class with two basis elements of size 3 and itself having an unbounded growth 
function (which is of course a prerequisite for Wilf collapse!) exhibits a Wilf collapse. Notably, 
as of this moment, we still do not know if the universe of 321-avoiding permutations exhibits a Wilf 
collapse and the results of this paper do not speak to this case.

In this paper, rather than focusing on individual examples of permutation classes, we derive general 
structural criteria that imply Wilf collapse, or even exponential Wilf collapse. Our approach is 
based on decomposing permutations into indecomposable blocks using the sum operation 
(see Section~\ref{sec-basic} for precise definitions). Specifically, we can prove the following 
results.
\begin{itemize}
 \item Any permutation class $\C$ obtained as a sum-closure of finitely many
permutations exhibits an exponential Wilf collapse, except for the class of 
21-avoiding permutations, whose growth function is bounded. See 
Corollary~\ref{cor-sumclosedfinitealphabet}.
 \item Any permutation class $\C$ with unbounded growth function and with only finitely many 
indecomposable permutations has a Wilf collapse. See Theorem~\ref{thm-finind}.
%  \item Any permutation class $\C$ which is both sum-closed and skew-closed and which has only 
% finitely many skew- and sum-indecomposable permutations has exponential Wilf collapse. See
% Corollary~\ref{cor-sumandskew}.
\end{itemize}
We remark that the first of these results is in fact a special case of a more general theorem 
(Theorem~\ref{thm-sumclosed}), which deals with sum-closures of possibly infinite sets satisfying 
certain additional restrictions. 

While our results focus on permutation classes, the underlying arguments can be generalised easily 
to some other contexts. The basis of our approach is the observation that any permutation can be 
uniquely expressed as a sum of a sequence of sum-indecomposable components. This yields a 
representation of a permutation by a word over an alphabet consisting of the indecomposable 
permutations. The containment of permutations then corresponds to a certain ``greedy'' embedding of 
words. We then identify, for a permutation class $\C$ satisfying suitable closure properties, a 
number of ``local modifications'' of the word which preserve the Wilf class of the corresponding 
permutation. These local modifications often take the form of applying a symmetry operation to a 
subword. Finally, and this is usually the most difficult part of the argument, we analyse the 
structure of a word representing a random permutation $\pi$ of $\C$, and show that with high 
probability it offers many opportunities for such local modifications, showing that $\pi$ belongs to 
a large Wilf class. 
% 
% 
% The contents of the toolkit are quite simple:
% one or more symmetries on the universe of structures, and
% the ability to witness containment in some sort of greedy fashion.
% The way to combine these is by showing that two structures $A$ and $B$ which have a large common 
% part and then a residue related by one of the symmetries may well be Wilf-equivalent. This is 
% because, given the presumption of a greedy method for recognising containment, we can view 
% recognising $A$-containment as an initial phase of recognising the common part followed by 
% recognising the residue. But the procedure for $B$-containment is then much the same -- needing only 
% to adapt the symmetry in recognising the residue. Put another way, given a greedy algorithm for 
% recognising containment it is often possible to identify local versions of symmetries that define 
% Wilf-equivalent structures. If we can find many parts of a given structure $A$ to which these local 
% symmetries apply, then the Wilf class of $A$ will be large and if this situation is 
% sufficiently typical, then a Wilf collapse occurs.

Our emphasis in this paper is simply on establishing the existence of a Wilf collapse: we make no 
attempt to determine the precise number of Wilf classes, or even an accurate asymptotic estimate. 
This is because our results are necessarily based on general criteria for Wilf equivalence, while 
specific permutation classes may often admit additional rules or coincidences that cause further 
collapse. Besides, even in quite simple settings such as those considered in \cite{AlbertLi} where 
the precise nature of a collapse can be computed, dealing with the exact answers can become quite 
technical. That is, demonstrating that certain groups of structures are Wilf-equivalent is easy, but 
demonstrating that no others are seems hard. Similarly, in \cite{AlbertBouvel} there is a 
conjectural description of the exact nature of the Wilf collapse within certain classes enumerated 
by the Catalan numbers, and while the experimental evidence in its favour seems quite strong, there 
is no known way to rule out some other ``accidental'' coincidences.

The structure of the remainder of this paper is as follows. In Section~\ref{sec-basic}, we provide 
the basic definitions needed to discuss permutation classes and Wilf collapse. 
Section~\ref{sec-words} then carries out some necessary preparatory work about words and 
generalisations of the subword relation. Section~\ref{sec-wilf} is devoted to the statements and 
proofs of our main results. Our two main results there are Theorem~\ref{thm-sumclosed} which deals 
with sum-closed classes, and Theorem~\ref{thm-finind} concerning classes having only finitely many 
sum-indecomposable permutations. Finally, in Section~\ref{sec-concluding} we discuss the 
significance and limitations of our results, and pose some further questions which we consider 
pertinent.

\section{Basic definitions}
\label{sec-basic}

We refer the reader to Vatter's excellent survey \cite{Vatter-survey} for a much more detailed consideration of permutation classes (as well as an historical introduction) providing here only the essential elements for our work. We are concerned only with permutations of size $n$ which we generally think of in one-line notation i.e., as sequences of length $n$ consisting of the elements of $[n] = \{1,2,\dots,n\}$ in some order. We write $|\pi|$ for the size of a permutation $\pi$\footnote{We generally try to avoid using the word ``length'' here although it is quite common and natural due to a possible confusion with the notion of length of a permutation arising in algebraic combinatorics.}.

When we take a subsequence of size $k$ of such a sequence and then relabel it so that its least element is labelled 1, its second least element 2, \dots, and its greatest element $k$ then we obtain another permutation and this relationship defines the notion of containment between permutations (sometimes called ``containment as patterns''). To rephrase: a permutation $\tau$ of size $n$ \emph{contains} a permutation $\pi$ of size $k$ if there is a subsequence of $\tau$ consisting of $k$ elements whose relabelling by relative value yields $\pi$. If this occurs we write $\pi \leq \tau$, and if not we say that $\tau$ \emph{avoids} $\pi$ and write $\pi \not\leq \tau$. For instance the permutation $31524$ contains the patterns $123$ (as $124$)  and $213$ (as either $315$ or $314$) but not the pattern $321$ (since no three of its elements form a descending sequence). A \emph{permutation class} is a collection of permutations, $\C$, closed downwards under containment, i.e., if $\tau \in \C$ and $\pi \leq \tau$ then $\pi \in \C$.

The partially ordered set $\S$ of all finite permutations ordered by containment admits eight symmetries corresponding to the action of the dihedral group on a square. These symmetries are easy to understand if we think of a permutation $\pi$ as being represented by the set of points $(i, \pi_i)$ contained in an axis-aligned square. Reflection in a vertical axis is called ``reverse'', in a horizontal axis ``complement'', and in an upward sloping diagonal ``inverse''. 

Given a permutation class $\C$ other than the class of all permutations, there are some 
$\leq$-minimal permutations in its complement and these are called its \emph{basis}. So $\C$ can 
also be described as the set of all permutations avoiding any permutation in its basis. If $X$ is 
any set of permutations then we write $\Av(X)$ for the class of permutations that avoid every 
element of $X$. If $X$ is an antichain with respect to containment then $X$ will be the basis of 
$\Av(X)$.

Given two permutations $\alpha$ and $\beta$ define their \emph{sum} $\alpha \oplus \beta$ to be the 
concatenation of $\alpha$ and $a + \beta$ where $a$ is the size of $\alpha$. For instance $231 
\oplus 2413 = 2315746$. It is easy to see that this operation is associative on permutations. A 
class $\C$ is \emph{sum-closed} if whenever $\alpha, \beta \in \C$ then also $\alpha \oplus \beta 
\in \C$. A permutation is \emph{sum-indecomposable} if it cannot be written as a proper sum of two 
permutations. There is a dual notion of \emph{skew-sum} ($\ominus$) where $\alpha \ominus \beta$ is 
the concatenation of $b + \alpha$ with $\beta$ (where the size of $\beta$ is $b$). It is easy to see 
that a permutation class is sum- (resp.~skew-) closed if and only if all of its basis elements are 
sum- (resp.~skew-) indecomposable.

% We say that a permutation class is \emph{fully closed} if it is both sum-closed and skew-closed, 
% and we say that a permutation is \emph{fully indecomposable} if it is both sum-indecomposable and 
% skew-indecomposable.

It is particularly convenient to work with (and within) sum-closed classes because there is a natural representation of any permutation $\alpha$ in such a class as the unique sequence of sum-indecomposable permutations $\alpha_1 \alpha_2 \cdots \alpha_k$ for which 
\[
\alpha = \alpha_1 \oplus \alpha_2 \oplus \cdots \oplus\alpha_k.
\]
This identifies the class with the language of words over its sum-indecomposables. Given any set $X$ 
of permutations there is a smallest class $\C$ which is sum-closed and contains $X$ (obtained simply 
by finding all the sum-indecomposable permutations that are contained in some element of $X$ and 
then taking all sums of those). This class is called the \emph{sum-closure} of~$X$. 
% Likewise, the 
% \emph{full closure} of $X$ is the smallest fully closed class containing all of~$X$.

Let a class $\C$ be given. Two permutations $\alpha, \beta \in \C$ are \emph{Wilf-equivalent in 
$\C$} (written $\alpha \we_{\C} \beta$) if the two classes $\C \cap \Av(\alpha)$ and $\C \cap 
\Av(\beta)$ are rank-equinumerous, i.e., have the same growth functions. The equivalence classes of 
$\we_\C$ are known as the \emph{Wilf classes}.

\begin{observation}
If $\alpha \we_{\C} \beta$ then $|\alpha| = |\beta|$.
\end{observation}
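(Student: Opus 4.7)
The plan is to argue by contradiction. Suppose $\alpha \we_{\C} \beta$ but $|\alpha| \neq |\beta|$, and assume without loss of generality that $n := |\alpha| < |\beta|$. I will derive a contradiction by comparing the number of size-$n$ permutations in $\C \cap \Av(\alpha)$ with the number in $\C \cap \Av(\beta)$ and showing these two counts differ; this contradicts rank-equinumerosity.

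The first count is straightforward: among permutations of size $n$ in $\C$, the only one that can contain $\alpha$ (which itself has size $n$) is $\alpha$ itself, since a permutation of size $n$ contains a permutation of size $n$ only when they coincide. Thus the number of size-$n$ permutations in $\C \cap \Av(\alpha)$ equals the total number of size-$n$ permutations in $\C$, minus one (for $\alpha$, which does lie in $\C$ by hypothesis).

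For the second count, any permutation of size $n$ in $\C$ lies in $\Av(\beta)$, because it cannot contain $\beta$: containment requires the containing permutation to be at least as large as the contained one, and $n < |\beta|$. So the number of size-$n$ permutations in $\C \cap \Av(\beta)$ equals the total number of size-$n$ permutations in $\C$. The two counts therefore differ by exactly one, contradicting the assumption that $\C \cap \Av(\alpha)$ and $\C \cap \Av(\beta)$ have the same growth function.

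There is no real obstacle here beyond verifying that the comparison is valid: the only subtlety is that $\alpha$ must actually belong to $\C$ (it does, by the definition of Wilf-equivalence in $\C$), so it genuinely contributes the discrepancy at size $n$.
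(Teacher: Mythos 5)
Your argument is correct and is essentially identical to the paper's proof: both compare the number of size-$|\alpha|$ permutations in the two avoidance classes, observing that exactly one (namely $\alpha$ itself) is excluded on one side, while none would be excluded on the other if $|\beta|$ were strictly larger. The only cosmetic difference is that you phrase it as a proof by contradiction while the paper phrases it directly.
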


\begin{proof}
Without loss of generality suppose that $k = |\alpha| \le |\beta|$. The number of permutations in 
$\C \cap \Av(\alpha)$ of size $k$ is exactly one less than the number of permutations in $\C$ of 
size $k$ and for this to be true of $\C \cap \Av(\beta)$ as well we must have $|\beta| = k$, 
otherwise every permutation in $\C$ of size $k$ belongs to $\C \cap \Av(\beta)$.
\end{proof}

For a positive integer $n$, let $\C_n$ denote the set of permutations in $\C$ of size $n$, let 
$c_n$ be the cardinality of $\C_n$, and let $w_n$ denote the number of Wilf classes formed by the  
permutations in $\C_n$. This allows us at last to define the fundamental concept which we 
will investigate.

\begin{definition}
The class $\C$ has a \emph{Wilf collapse} if $w_n = o(c_n)$ and an \emph{exponential Wilf collapse} 
if, for some $r < 1$, $w_n = o (r^n c_n)$.
\end{definition}

If $\C$ is closed under some symmetry $\phi$ of $\S$ and $\alpha \in \C$ then $\alpha \we_{\C} 
\phi(\alpha)$. However, this never provides a Wilf collapse since $\S$ has only eight symmetries. 
That said, these equivalences will form the core of many of our constructions that do demonstrate 
Wilf collapse.

% The following result collects some special cases of Theorems  \ref{thm-sumclosed}, \ref{thm-exp}, 
% \ref{thm-poly}, and \ref{thm-sumandskew} and is included to entice the reader to continue:
% 
% \begin{corollary}
% \label{cor-summary}
% Let $X$ be any finite set of permutations. Then:
% \begin{itemize}
% \item
% The sum-closure of $X$ has an exponential Wilf collapse unless $X$ consists only of increasing 
% permutations,
% \item
% any subclass, $\C$, of the sum-closure of $X$ has a Wilf collapse provided that $c_n$ is 
% unbounded, and
% \item
% the full closure of $X$ has an exponential Wilf collapse.
% \end{itemize}
% \end{corollary}

\section{A digression on words}
\label{sec-words}

Let $\A$ be a set of symbols which we will call the \emph{letters} of an \emph{alphabet}. A 
\emph{word} over $\A$ is just a finite sequence (possibly empty) of elements of $\A$ --- the set of 
all words over $\A$ is denoted $\A^{\ast}$ and the set of non-empty words is denoted $\A^+$. The 
empty word is denoted~$\epsilon$. The set $\A^{\ast}$ has an associative operation which is 
normally simply represented by concatenation. 

We generally use lower case letters from near the beginning of the alphabet to denote elements of 
$\A$ and upper case letters from near the end of the alphabet to denote words. That said, we will 
freely identify a letter $a\in\A$ with the corresponding word of length $1$, and thus treat $\A$ as 
a subset of~$\A^+$.

If $W = a_1 a_2 \cdots a_n$ then we say that $a_i$ is the character of \emph{index} $i$. If $W \in 
\A^+$ then $\first(W)$ and $\last(W)$ denote the first and last letter of $W$ respectively; this 
notation is not defined for the empty word, i.e., when used contains an implicit condition that $W 
\neq \epsilon$.

We will further assume that each letter $a\in\A$ has a \emph{weight}, denoted $\weight(a)$, which 
is a positive integer. We extend the weight function to $\A^\ast$ by setting 
\[
\weight(a_1 a_2 \cdots a_k) = \sum_{i=1}^k \weight(a_i).
\]

An \emph{embedding order} is any partial order $\leq$ on $\A^\ast$ satisfying these conditions:
\begin{itemize}
\item For any $W\in\A^\ast$, we have $\epsilon\leq W$.
\item If $W\leq V$ for some $V,W\in\A^\ast$ with $W\neq V$, then $\weight(W)<\weight(V)$.
\item Suppose that $V=a_1a_2\cdots a_k$. Then, for any $W\in \A^*$, we have $W\leq V$ if and only 
if $W$ admits a factorisation $W=W_1W_2\cdots W_k$ such that $W_i\leq a_i$ for each~$i$.
\end{itemize}

A familiar example of an embedding order is the \emph{subword order}, where $W=a_1a_2\cdots a_k$ is 
a subword of $V=b_1b_2\cdots b_{\ell}$ if the sequence $a_1,\dotsc,a_k$ is a (not necessarily 
consecutive) subsequence of $b_1,\dotsc,b_\ell$. In fact, if $W$ is a subword of $V$, then in any 
embedding order $\leq$ we must have $W\leq V$. 

From now on, we assume that $\leq$ is an embedding order for a weighted alphabet~$\A$. A useful 
feature of such orders is that containment can be tested by a natural ``greedy'' procedure, as shown 
by the next proposition. 

\begin{proposition}
\label{prop-greed-is-good}
Let $W, V \in \A^{\ast}$ be two words with $V = b_1 b_2 \cdots b_m$, let $X=b_1\cdots b_i$ be a 
prefix of $V$, and let $Y=b_{i+1}\cdots b_m$ be the corresponding suffix. Let $P$ be the maximal 
prefix of $W$ such that $P \leq X$, and write $W = PS$. Then $W \leq V$ if and only if $S \leq Y$.
\end{proposition}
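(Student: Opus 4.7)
The plan is to handle the two implications separately, using the factorisation axiom as the main tool and relying on the observation (noted in the paper just above) that every subword of a word sits below it in any embedding order.

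The backward direction is routine: assuming $S\leq Y$ and using $P\leq X$ (which holds by the very definition of $P$), the factorisation axiom produces decompositions $P=P_1\cdots P_i$ and $S=S_1\cdots S_{m-i}$ with $P_j\leq b_j$ and $S_j\leq b_{i+j}$, and their concatenation is a factorisation of $W=PS$ which the axiom turns back into $W\leq V$.

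The forward direction is where the real argument happens. Starting from $W\leq V$, the factorisation axiom supplies some decomposition $W=W_1\cdots W_m$ with $W_j\leq b_j$. The prefix $P^{\ast}=W_1\cdots W_i$ then satisfies $P^{\ast}\leq X$ via the inherited factorisation, so by maximality of $P$ it is a prefix of $P$, say $P=P^{\ast}T$. The overshoot $T$ is a prefix of $W_{i+1}\cdots W_m$, so it terminates inside some block $W_k$ (with $k\geq i+1$), giving a split $W_k=W_k^L W_k^R$ with $T=W_{i+1}\cdots W_{k-1}W_k^L$ and $S=W_k^R W_{k+1}\cdots W_m$. I would then exhibit a factorisation of $S$ matching $Y=b_{i+1}\cdots b_m$ that places empty parts in the slots for $b_{i+1},\dots,b_{k-1}$, the suffix $W_k^R$ in the slot for $b_k$, and the unchanged $W_{k+1},\dots,W_m$ in the remaining slots; the factorisation axiom then delivers $S\leq Y$. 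The only nontrivial inequality to verify in this factorisation is $W_k^R\leq b_k$, which is exactly where the subword-embedding fact enters: $W_k^R$ is a suffix (hence a subword) of $W_k$, so $W_k^R\leq W_k$, and transitivity with $W_k\leq b_k$ completes it. The main obstacle is arranging this redistribution at all, since the factorisation of $W$ coming out of the axiom need not align with the greedy prefix $P$; the repair hinges on the single-letter slot $b_k$ absorbing any suffix of $W_k$, a property we ultimately owe to the subword-embedding observation.
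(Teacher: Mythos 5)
Your proof is correct and follows essentially the same route as the paper's: you factor $W$ via $W\leq V$, observe that the greedy prefix $P$ dominates the induced prefix $W_1\cdots W_i$, and then show the leftover $S$ (a suffix of $W_{i+1}\cdots W_m$) embeds into $Y$. The paper compresses your explicit redistribution step (padding with empty slots and letting $b_k$ absorb the suffix $W_k^R$ via the subword observation) into the single remark that a suffix of $W_{i+1}\cdots W_m$ is below $Y$; your version just makes that final appeal to the factorisation axiom and transitivity explicit.
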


\begin{proof}
If $P \leq X$ and $S\leq Y$, then $W=PS \leq XY=V$ by the properties of embedding order. 
Conversely, 
if $W \leq V$ then we can write $W = W_1 W_2 \cdots W_m$ with $W_j \leq b_j$. In particular, 
$W_1W_2\cdots W_i$ is either $P$ or a proper prefix of~$P$. Then $S$ is a (not necessarily proper) 
suffix of $W_{i+1}\cdots W_m$ and hence $S \leq Y$.
\end{proof}

\begin{definition}
Let $W,V \in \A^\ast$. If $W \leq V$ and $W \not \leq P$ for any proper prefix $P$ of $V$ then we 
say that $V$ is a \emph{minimal container} for $W$ and write $W \leq^{\ast} V$. Further we define 
generating functions:
\begin{align*}
A(x) &= \sum_{a \in \A} x^{\weight(a)}, \\
I_W(x) &= \sum_{W \leq V} x^{\weight(V)},\\
I_W^\ast(x) &= \sum_{W \leq^\ast V} x^{\weight(V)}.
\end{align*}
\end{definition}

Observe that $W \leq V$ if and only if $V$ can be written as $V = PZ$ where $W \leq^\ast P$ and $Z 
\in \A^\ast$ is arbitrary; moreover $P$ is uniquely determined as the minimal prefix of $V$ which 
is greater than or equal to~$W$. This corresponds to the following identity of generating 
functions.

\begin{observation}
\label{obs-IW-vs-IWstar}
For any $W \in \A^{\ast}$
\[
I_W(x) = \frac{I^{\ast}_{W}(x)}{1 - A(x)}.
\]
In particular $I_W(x) = I_V(x)$ if and only if $I^{\ast}_{W}(x) = I^{\ast}_{V}(x)$. 
\end{observation}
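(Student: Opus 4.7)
The plan is to verify the bijective decomposition that the observation itself alludes to in the paragraph before the formal statement, and then translate it into the generating function identity.

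First I would check carefully that the decomposition $V=PZ$ with $W\leq^\ast P$ and $Z\in\A^\ast$ arbitrary is indeed a bijection between pairs $(P,Z)$ and words $V$ with $W\leq V$. Given such $V$, there is some prefix of $V$ (namely $V$ itself) that contains $W$, so the set of prefixes of $V$ containing $W$ is non-empty and one may take $P$ to be its shortest member; writing $V=PZ$, the minimality forces $W\leq^\ast P$. Conversely, given $V=PZ$ with $W\leq^\ast P$, the fact that $P\leq V$ follows from the third axiom of embedding orders by padding the factorisation of $P$ with empty factors on the positions corresponding to $Z$ (the first axiom guarantees $\epsilon\leq a$ for every letter $a$), and then $W\leq P\leq V$ by transitivity. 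Uniqueness of $P$ follows because if $V=P_1Z_1=P_2Z_2$ with $W\leq^\ast P_1, P_2$ and $|P_1|<|P_2|$, then $P_1$ would be a proper prefix of $P_2$ satisfying $W\leq P_1$, contradicting $W\leq^\ast P_2$.

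Next, because weight is additive under concatenation, this bijection gives the generating function identity
\[
I_W(x)=\sum_{W\leq V}x^{\weight(V)}=\Bigl(\sum_{W\leq^\ast P}x^{\weight(P)}\Bigr)\Bigl(\sum_{Z\in\A^\ast}x^{\weight(Z)}\Bigr)=I_W^\ast(x)\cdot\sum_{k\geq 0}A(x)^k=\frac{I_W^\ast(x)}{1-A(x)},
\]
where the penultimate equality uses the standard fact that $\sum_{Z\in\A^\ast}x^{\weight(Z)}=\sum_{k\geq 0}A(x)^k$, which holds because a word of length $k$ is an ordered $k$-tuple of letters and its weight is the sum of letter-weights.

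For the ``in particular'' clause, note that $A(x)$ has zero constant term (all weights are positive integers), hence $1-A(x)$ is an invertible element of $\mathbb Z[[x]]$. Multiplying the identity by $1-A(x)$ shows that $I_W(x)=I_V(x)$ is equivalent to $I_W^\ast(x)=I_V^\ast(x)$, completing the proof. The only step requiring any care is the verification of the decomposition, and even there the main point is just to match up what ``minimal container'' means with the uniqueness of the shortest $W$-containing prefix.
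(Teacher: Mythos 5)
Your proof is correct and follows the same route the paper sketches: establish the bijective decomposition $V = PZ$ with $W \leq^\ast P$ and $Z \in \A^\ast$, then translate it into the product of generating functions. The paper presents this as an observation without spelling out the verification; you simply fill in the details (existence and uniqueness of the shortest $W$-containing prefix $P$, and the invertibility of $1-A(x)$ for the ``in particular'' clause), which is exactly what a careful reader would do.
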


Let us say that two words $W$ and $V$ are \emph{equivalent}, denoted by $W\wwe V$, if 
$I_W(x)=I_V(x)$. Our goal is to show that, under certain assumptions about $\leq$, there are many 
pairs of equivalent words.

\begin{definition}
Let $a,b\in\A$ be two letters. The ordered pair $(a,b)$ is \emph{incompatible} if there is no $c\in 
\A$ such that $ab\leq c$.
\end{definition}

\begin{definition}
Let $W_1, W_2,\dots, W_k \in \A^+$. The factorisation $W = W_1 W_2 \cdots W_k$ is 
\emph{incompatible} if for $1 \leq i < k$, $(\last(W_i), \first(W_{i+1}))$ are incompatible pairs.
\end{definition}

\begin{proposition}
\label{prop-incompatible-factorisation}
Suppose that that $W = W_1 W_2 \cdots W_k$ is an incompatible factorisation. Then
\[
I_W (x) = \frac{\prod_{i=1}^k I^\ast_{W_i}(x)}{1 - A(x)}.
\]
\end{proposition}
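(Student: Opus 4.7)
By Observation~\ref{obs-IW-vs-IWstar}, the stated identity is equivalent to
\[
I^\ast_W(x)=\prod_{i=1}^k I^\ast_{W_i}(x).
\]
I plan to prove this by exhibiting a weight-preserving bijection $(V_1,\dots,V_k)\mapsto V_1V_2\cdots V_k$ from tuples in which each $V_i$ is a minimal container of $W_i$ onto the set of minimal containers of $W$.

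The crux is the following canonical decomposition lemma, proved by induction on $k$: if $V$ is a minimal container of $W$, and $V_1$ is the shortest prefix of $V$ with $W_1\leq V_1$, then $V_1$ is a minimal container of $W_1$ and, writing $V=V_1U$, the suffix $U$ is a minimal container of $W_2\cdots W_k$. To establish it, apply Proposition~\ref{prop-greed-is-good} to get $W=PS$ with $P$ the maximal prefix of $W$ embedding into $V_1$ and $S\leq U$; the key step is showing $P=W_1$. Since $W_1$ is one prefix of $W$ that fits in $V_1$, $W_1$ is a prefix of $P$. If $P$ were strictly longer, then $W_1a\leq V_1$ for $a=\first(W_2)$, and writing $V_1=b_1\cdots b_m$, the third axiom of embedding order yields $W_1a=U_1\cdots U_m$ with $U_i\leq b_i$. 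Locate the letter $a$: if $a\in U_j$ for $j<m$, all later $U_i$ are empty and the initial part of the factorisation exhibits $W_1\leq b_1\cdots b_j$, contradicting minimality of $V_1$ as a container for $W_1$; if $a\in U_m$ with $U_m=a$, again $W_1\leq b_1\cdots b_{m-1}$, the same contradiction; and if $U_m$ ends in $\last(W_1)\,a$, then $\last(W_1)\first(W_2)$ is a consecutive subword of $U_m$, so $\last(W_1)\first(W_2)\leq U_m\leq b_m$, contradicting the incompatibility of $(\last(W_1),\first(W_2))$. Hence $P=W_1$ and $W_2\cdots W_k\leq U$; minimality of $U$ follows because a strictly shorter container $U'$ would make $V_1U'$ a proper prefix of $V$ containing $W$. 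The induction closes since $W_2\cdots W_k$ inherits the incompatible factorisation.

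With the lemma in hand, injectivity of $(V_1,\dots,V_k)\mapsto V_1\cdots V_k$ is immediate, because the greedy rule in the lemma uniquely recovers each $V_i$ from the concatenation; and the lemma itself shows every minimal container of $W$ is in the image. For the reverse inclusion, suppose $V=V_1\cdots V_k$ with each $V_i$ minimal for $W_i$, yet some proper prefix of $V$ contains $W$; then the shortest such prefix $\bar V\subsetneq V$ is a minimal container of $W$, and the lemma gives $\bar V=\bar V_1\cdots\bar V_k$ with $\bar V_1$ the shortest prefix of $\bar V$ (equivalently, of $V$) containing $W_1$. But $V_1$ is itself the shortest prefix of $V$ containing $W_1$, so $\bar V_1=V_1$; iterating on the remaining suffix forces $\bar V_i=V_i$ for all $i$, whence $\bar V=V$, a contradiction. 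The resulting weight-preserving bijection yields the desired identity.

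The main obstacle is the incompatibility step inside the key lemma, which is the only place the hypothesis is used essentially. It relies on the fact recorded in Section~\ref{sec-words} that subword inclusion always implies $\leq$, so that two consecutive letters appearing inside a factor $U_m\leq b_m$ themselves embed into the single letter $b_m$. Everything else is bookkeeping with the greedy characterisation and induction on $k$.
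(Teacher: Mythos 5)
Your proposal is correct and follows essentially the same route as the paper: both hinge on the greedy‑decomposition fact that, in any container $V$ of $W$, the shortest prefix $X$ of $V$ with $W_1\leq^\ast X$ absorbs \emph{exactly} $W_1$ and no more, because absorbing the next letter $\first(W_2)$ would force $\last(W_1)\first(W_2)$ into a single letter of $X$, contradicting incompatibility. The paper phrases this as an induction on $k$ yielding $I_W=I^\ast_{W_1}I_{W_2\cdots W_k}$, while you repackage it as a weight‑preserving bijection between $k$‑tuples of minimal containers and minimal containers of $W$; your three‑case analysis of where $\first(W_2)$ can land is just a fuller spelling‑out of the paper's compressed argument, and both proofs are the same in substance.
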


\begin{proof}
The proof is by induction on $k$. For $k = 1$ it is simply Observation~\ref{obs-IW-vs-IWstar}. Now 
suppose that $k > 1$ and the result holds for all lesser~$k$. To complete the proof, it suffices to 
show that 
\begin{equation}
I_W(x) = I^{\ast}_{W_1}(x) I_{W_2 W_3 \cdots W_k}(x),\label{eq-w}
\end{equation}
and apply induction. To prove~\eqref{eq-w}, we will show that a word $V\in\A^\ast$ satisfies $W\leq 
V$ if and only if $V$ can be written as $V=XY$ with $W_1\leq^\ast X$ and $W_2 W_3 \cdots W_k\leq 
Y$, and moreover, the $X$ and $Y$ are then determined uniquely. 

Clearly, for any choice of $X$ and $Y$ satisfying $W_1\leq^\ast X$ and $W_2 W_3 \cdots 
W_k\leq Y$, we have $W\leq XY$. To prove the converse, choose $V$ such that $W \leq V$. Then there 
is a unique prefix $X$ of $V$ such that $W_1 \leq^\ast X$. Let $V = XY$, $a = \last(W_1)$, $b = 
\first(W_2)$ and $X = c_1 c_2 \cdots c_n$. By the definition of an embedding order, $W_1$ can be 
written as $W_1=Z_1 Z_2\cdots Z_n$ with $Z_i\leq c_i$. Moreover, from the minimality of $X$, it 
follows that $Z_n\neq\epsilon$ and that $W_1\not\leq c_1\cdots c_{n-1}$. In particular, 
$a=\last(Z_n)\le c_n$. Since $a$ and $b$ are incompatible, we know that $ab\not\leq c_n$, and 
therefore $W_1$ is the longest prefix of $W$ such that $W_1\leq X$. By 
Proposition~\ref{prop-greed-is-good}, we get $W_2\cdots W_k\leq Y$, as claimed.
\end{proof}

The following direct corollary of Proposition 
\ref{prop-incompatible-factorisation} and Observation \ref{obs-IW-vs-IWstar} is 
the keystone in constructing many examples of equivalent words:

\begin{corollary}
\label{cor-incompatible-shuffle}
Suppose that for $1 \leq i \leq k$, $W_i\wwe V_i$, and that $\pi \in \S_k$. If both 
factorisations $W = W_1 W_2 \cdots W_k$ and $V =V_{\pi(1)} V_{\pi(2)} \cdots V_{\pi(k)}$ are 
incompatible then $W\wwe V$.
\end{corollary}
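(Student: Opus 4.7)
The plan is to combine the product formula from Proposition~\ref{prop-incompatible-factorisation} with the equivalence between $I_W$ and $I^\ast_W$ given by Observation~\ref{obs-IW-vs-IWstar}, and then exploit the commutativity of multiplication in the ring of formal power series to absorb the permutation $\pi$.

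First, I would apply Proposition~\ref{prop-incompatible-factorisation} to the incompatible factorisation $W = W_1 W_2 \cdots W_k$ to get
\[
I_W(x) = \frac{\prod_{i=1}^k I^\ast_{W_i}(x)}{1 - A(x)},
\]
and analogously, applying it to the incompatible factorisation $V = V_{\pi(1)} V_{\pi(2)} \cdots V_{\pi(k)}$, I obtain
\[
I_V(x) = \frac{\prod_{i=1}^k I^\ast_{V_{\pi(i)}}(x)}{1 - A(x)}.
\]

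Next, I would invoke Observation~\ref{obs-IW-vs-IWstar}, which says that $I_{W_i}(x) = I_{V_i}(x)$ if and only if $I^\ast_{W_i}(x) = I^\ast_{V_i}(x)$. Since the hypothesis $W_i \wwe V_i$ is exactly $I_{W_i}(x) = I_{V_i}(x)$, we get $I^\ast_{W_i}(x) = I^\ast_{V_i}(x)$ for each $i$. Substituting into the formula for $I_V(x)$ and using the fact that the product in the numerator is unchanged by any reordering of its factors, I conclude
\[
\prod_{i=1}^k I^\ast_{V_{\pi(i)}}(x) = \prod_{i=1}^k I^\ast_{V_i}(x) = \prod_{i=1}^k I^\ast_{W_i}(x),
\]
so $I_V(x) = I_W(x)$, which by definition means $W \wwe V$.

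There is no real obstacle here: the corollary is essentially a bookkeeping consequence of the two previous results. The only point worth flagging is to make explicit that $\pi$ only permutes the factors appearing in a commutative product and therefore leaves the generating function invariant; this is why the statement can tolerate an arbitrary $\pi\in \S_k$ rather than requiring the factors to appear in the same order on both sides.
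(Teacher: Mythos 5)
Your proof is correct and is precisely the argument the paper intends — the corollary is stated as a direct consequence of Proposition~\ref{prop-incompatible-factorisation} and Observation~\ref{obs-IW-vs-IWstar}, and you have spelled out exactly those two steps together with the commutativity of the product.
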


% In particular we will frequently make use of some special cases of this corollary based on the idea of ``completely incompatible'' letters. 
% 
% \begin{definition}
% The letter $a \in \A$ is \emph{completely incompatible} if for all $b \in \A$ both pairs $(a,b)$ 
% and $(b,a)$ are incompatible.
% \end{definition}
% 
% \begin{proposition}
% \label{pro-shuffles}
% The following equivalences between words hold:
% \begin{enumerate}
% \item
% If $W = W_0 W_1 W_2 \cdots W_k$ is such that for $1 \leq i \leq k$, $\first(W_i)$ is a completely 
% incompatible letter, and $V = W_0 W_{\pi(1)} W_{\pi(2)} \cdots W_{\pi(k)}$ for some $\pi \in \S_k$, 
% then~$W \wwe V$.
% \item If $a$ and $b$ are two completely incompatible letters and $P$, $W$ and $S$ are arbitrary 
% words, then $PaWbS\wwe PbWaS$.
% \item 
% If $W = a_1 a_2 \dots a_n$, $V = a_{\pi(1)} a_{\pi(2)} \dots a_{\pi(n)}$ for some $\pi \in \S_n$ 
% and for all $i$ such that $i \neq \pi(i)$, $a_i$ is a completely incompatible letter, then $W \wwe 
% V$.
% \end{enumerate}
% \end{proposition}
% \begin{proof} 
% Parts 1 and 2 follow directly from Corollary~\ref{cor-incompatible-shuffle}. Part 3 is a direct 
% consequence of part 2.
% \end{proof}

\subsection{Uniform sampling of words}\label{ssec-uniform}

We will often need to refer to the properties of uniformly random words of a 
given weight in a given set $\A^\ast$. Recall that 
\[
A(x) = \sum_{a \in \A} x^{\weight(a)}
\]
 is the 
generating function of the alphabet $\A$, and define 
\[
A^*(x)=\sum_{W \in \A^\ast} x^{\weight(W)} = \frac{1}{1-A(x)}.
\]

Let $\rho_A$ be the radius of convergence of~$A(x)$. We say that $\A^*$ is \emph{supercritical} if 
\[
\lim_{x\to\rho_A^-} A(x)>1.
\]
If $\A^*$ is supercritical, then the radius of convergence of $A^\ast(x)$ is the unique positive value $\kappa<\rho_A$ such 
that $A(\kappa)=1$. 

We say that $\A^\ast$ is \emph{aperiodic} if the greatest common divisor of 
$\{\weight(a);\; a\in\A\}$ is~1. 

In our setting, where the alphabet $\A$ will generally correspond to the sum-indecomposable elements 
of a permutation class,  aperiodicity is satisfied since there is a letter of weight~$1$. But even 
more generally it is not a significant restriction, since we can simply divide all the weights by 
their greatest common divisor. Supercriticality, on the other hand, is a more fundamental property.

\newcommand{\barw}{\overline{w}}
\newcommand{\wpl}{w^+}
In the rest of Subsection~\ref{ssec-uniform}, we assume that $\A^\ast$ is supercritical, with 
$A(x)$ and $\kappa$ as above, and we fix a probability measure on $\A$ defined by 
$\bbP(a)=\kappa^{\weight(a)}$. This will be the underlying probability measure 
whenever we speak of a random letter from~$\A$.

Let $\barw=\bbE[\weight(a)]$ denote the expected weight of a letter from~$\A$. 
%(with respect to  the probability measure defined above). 
%Observe that $\barw$ exists and is finite: indeed, we have 
Then
\[
\barw=\sum_{a\in\A} \weight(a)\kappa^{\weight(a)}= \kappa A'(\kappa),
\] 
where $A'$ is the derivative of $A$, and $A'(\kappa)$ is finite since $A$ is analytic at $\kappa$.

As a technical tool, we will use the following concentration inequality, which follows from 
standard probabilistic results; see e.g. the books of Dembo and Zeitouni~\cite[Chapter~2.2]{Dembo} 
or Flajolet and Sedgewick~\cite[Chapter IX.10]{Flajolet}. We include the proof here for 
completeness.

%OMIT proof in journal version (and adjust previous paragraph).

\begin{proposition}\label{pro-con} For every $\varepsilon>0$ there is a $\delta>0$ such 
that for a random word $W=a_1a_2\dotsb a_k$ obtained by concatenating $k$ random independent 
letters from $\A$, we have
\begin{align*}
 \bbP\bigl(\weight(W)\ge (1+\varepsilon)k\barw\bigr)&\le e^{-\delta k} &\text{and}&& 
\bbP\bigl(\weight(W)\le (1-\varepsilon)k\barw\bigr)&\le e^{-\delta k}.
\end{align*}
\end{proposition}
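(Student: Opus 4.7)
The plan is to apply the standard Chernoff/Cramér large-deviation technique, exploiting that $\weight(W)=\sum_{i=1}^{k}\weight(a_i)$ is a sum of $k$ i.i.d.\ copies of the positive integer-valued random variable $\weight(a)$. The central object is the moment generating function
\[
M(t) := \bbE\bigl[e^{t\weight(a)}\bigr] = \sum_{a\in\A}\kappa^{\weight(a)}e^{t\weight(a)} = A(\kappa e^t).
\]
Because supercriticality ensures the strict inequality $\kappa<\rho_A$ (as already noted in the paper), $M(t)=A(\kappa e^t)$ is finite and analytic on the open interval $t\in(-\infty,\log(\rho_A/\kappa))$, which contains an open neighbourhood of $0$. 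Differentiating at $t=0$ gives the key identities $M(0)=A(\kappa)=1$ and $M'(0)=\kappa A'(\kappa)=\barw$.

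For the upper tail I would apply Markov's inequality to $e^{t\weight(W)}$ for a small $t>0$:
\[
\bbP\bigl(\weight(W)\ge(1+\varepsilon)k\barw\bigr)\le e^{-t(1+\varepsilon)k\barw}M(t)^k = e^{k\phi(t)},
\]
where $\phi(t):=\log M(t)-t(1+\varepsilon)\barw$. Then $\phi(0)=0$ and $\phi'(0)=\barw-(1+\varepsilon)\barw=-\varepsilon\barw<0$, so by continuity there exists some $t_+>0$ with $\phi(t_+)<0$, and setting $\delta_+:=-\phi(t_+)>0$ yields the first bound. The lower tail is handled symmetrically: for a small $t>0$,
\[
\bbP\bigl(\weight(W)\le(1-\varepsilon)k\barw\bigr)\le e^{t(1-\varepsilon)k\barw}M(-t)^k = e^{k\psi(t)},
\]
with $\psi(t):=\log M(-t)+t(1-\varepsilon)\barw$ satisfying $\psi(0)=0$ and $\psi'(0)=-\varepsilon\barw<0$, producing a $\delta_->0$ in the same way. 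Taking $\delta=\min(\delta_+,\delta_-)$ completes the proof.

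The only genuinely subtle point is ensuring that $M(t)$ is well-defined on an open neighbourhood of $0$, as this is what allows us to pick $t_\pm>0$ inside the domain of analyticity and extract a strictly negative value of $\phi$ or $\psi$. This is exactly where the \emph{strict} supercriticality of $\A^\ast$ is used, as opposed to mere criticality where $\kappa$ would coincide with $\rho_A$ and the MGF might fail to be finite on any interval $[0,t_+)$.
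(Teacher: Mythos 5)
Your proof is correct and takes essentially the same route as the paper: both apply the Chernoff/Cramér exponential-moment bound via the function $A(\kappa e^t)$, use $\kappa<\rho_A$ (supercriticality) to guarantee analyticity of the moment generating function near $t=0$, and conclude from the first-order behaviour at $t=0$ (your $\phi'(0)=-\varepsilon\barw<0$ is the paper's $K(t)=\barw t + O(t^2)<(1+\varepsilon)\barw t$ for small $t>0$) that a strictly negative exponent exists. The only cosmetic difference is that you work with the moment generating function $M(t)$ where the paper works with the cumulant generating function $K(t)=\log M(t)$.
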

\begin{proof}
Let $X$ be the random variable on $\A$ defined as $X=\weight(a)$ where $a\in \A$ is a random 
letter. Define the function $K(t)=\ln(\bbE[e^{tX}])=\ln\left(\sum_{k\ge 0} 
\bbE[X^k]\frac{t^k}{k!}\right)$. In probability theory, the function $K$ is known as the `cumulant 
generating function'. It can also be written as $K(t)=\ln\left(A(\kappa e^t)\right)$. We may then 
easily check that $K(t)$ is analytic at $t=0$ and has a Taylor series expansion of the form 
$K(t)=\barw t+O(t^2)$ in a neighborhood of~$t=0$.

Let $W=a_1a_2\dotsb a_k$ be a word of $k$ letters chosen independently from~$\A$, and let $X_i$ be 
the weight of $a_i$. In particular, $X_1,\dotsc,X_k$ are independent random variables of the same 
distribution as~$X$. Let $Y=X_1+X_2+\dotsb+X_k$.

Fix an $\varepsilon>0$ and write $\wpl=(1+\varepsilon)\barw$. Let $Q$ denote the event $Y\ge 
k\wpl$, and let $\bbI_Q$ be the indicator function of this event, i.e., the 
function equal to 1 when the event occurs and 0 otherwise. Observe that for any $t\ge 0$, the 
function $\bbI_Q$ is bounded from above by $\exp(tY-tk\wpl)$. We then have
\begin{align*}
\bbP(Y\ge k\wpl)&=\bbE[\bbI_Q]\\
&\le \bbE[\exp(tY-tk\wpl)]\\
&=\exp(-tk\wpl)\bbE[\exp(tY)]\\
&=\exp(-tk\wpl)\bbE\left[\prod_{i=1}^k \exp(tX_i)\right]\\
&=\exp(-tk\wpl)\prod_{i=1}^k \bbE\left[\exp(tX_i)\right]\\
&=\exp(-tk\wpl)\exp(kK(t))\\
&=\exp\left(k(K(t)-t\wpl)\right).
\end{align*}
Recalling that $K(t)=\barw t+O(t^2)$, we can find a sufficiently small value $t>0$ such that 
$K(t)<t\wpl$. Choosing such a $t$ and putting $\delta=t\wpl-K(t)>0$, we obtain $\bbP(Y\ge k\wpl)\le 
e^{-\delta k}$, as claimed. 

The second inequality of the proposition is proven by an analogous argument, except now we consider 
the values $t\le 0$.
\end{proof}

Our main concern will be to understand the structure of random words of fixed weight, which is 
usually much more challenging than dealing with words of fixed length. Let $\A^\ast_n$ be set of 
words of $\A^\ast$ of weight~$n$. Clearly, $\A^\ast_n$ is finite, and we will consider the uniform 
probability measure on this set, i.e., the measure where every word $W\in\A^\ast_n$ has probability
$1/|\A^\ast_n|$. 

To generate such a random word from $W\in\A^\ast_n$, we may use the following process, known as 
\emph{Boltzmann sampler with rejection}. The process works in two phases, where in the first phase, 
it generates a random word of weight at least $n$, and in the next phase, it rejects the generated 
word if its weight is not exactly~$n$.  More precisely, the Boltzmann sampler works as follows.

\textbf{First phase.}
For $i=1,2,\dotsc$, select randomly and independently a letter $a_i\in\A$. Stop as soon as 
$\weight(a_1a_2\dotsb a_i)\ge n$, and let $W=a_1a_2\dotsb a_i$ be the generated word.\\
\textbf{Second phase.} For the word $W$ generated by the first phase, check whether $W$ has 
weight $n$. If it does, the second phase succeeds, and $W$ is output. If not, the second phase 
fails and the whole sampler is restarted.

Samplers of this form were analyzed by Duchon et al.~\cite[Section 7]{DFLS}. Let us summarize the 
main results of their analysis.

\begin{fact}[Duchon et al.~\cite{DFLS}] Suppose that $\A^\ast$ is aperiodic and supercritical. 
Then the second phase of the Boltzmann sampler with rejection succeeds with probability $\Omega(1)$. 
The word output by the sampler is a uniformly random element of $\A^\ast_n$, i.e., each word 
$W\in\A^\ast_n$ is generated with probability $1/{|\A^\ast_n|}$. 
\end{fact}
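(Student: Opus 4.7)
The plan is to verify two separate claims: first, that conditional on the second phase succeeding the output is uniform on $\A^\ast_n$; and second, that the second phase succeeds with probability bounded away from zero uniformly in $n$. The uniformity will follow from a clean elementary calculation exploiting the memoryless nature of the independent letter samples, whereas the positive acceptance probability will come from a standard singularity analysis of $A^\ast(x) = 1/(1-A(x))$.

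For the uniformity, I would begin by noting that supercriticality guarantees the existence of $\kappa\in(0,\rho_A)$ with $A(\kappa)=1$, so the rule $\bbP(a)=\kappa^{\weight(a)}$ defines a genuine probability distribution on $\A$. Since every letter has weight at least one, the first phase terminates in at most $n$ steps. Given any fixed $W=a_1a_2\cdots a_k\in\A^\ast_n$, every proper prefix of $W$ has weight strictly less than $n$, so the stopping rule ensures that the first phase outputs $W$ precisely when it samples the letters $a_1,\ldots,a_k$ in this exact order, which occurs with probability
\[
\prod_{i=1}^k \kappa^{\weight(a_i)}=\kappa^{\weight(W)}=\kappa^n.
\]
Since this depends only on $n$ and not on the particular $W$, conditioning on the event that the second phase succeeds produces the uniform distribution on $\A^\ast_n$.

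The same computation shows that the unconditional acceptance probability equals $\kappa^n |\A^\ast_n|$, so I would reduce the second claim to proving $|\A^\ast_n| = \Omega(\kappa^{-n})$ and attack it by singularity analysis. Since $A'(\kappa)$ is finite and positive, a Taylor expansion gives $A(x)=1+A'(\kappa)(x-\kappa)+O((x-\kappa)^2)$ near $\kappa$, so $A^\ast$ has a simple pole at $\kappa$ with residue $-1/A'(\kappa)$. The main technical point is ruling out other singularities of $A^\ast$ on the circle $|x|=\kappa$, and this is precisely where aperiodicity enters: if $\gcd\{\weight(a):a\in\A\}=1$, then for any $\theta\not\equiv 0\pmod{2\pi}$ the arguments $\theta\cdot\weight(a)$ cannot all align, hence $|A(\kappa e^{i\theta})|<A(\kappa)=1$, making $\kappa$ the unique dominant singularity. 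A standard meromorphic transfer (as in Flajolet--Sedgewick, Chapter IV) then yields $|\A^\ast_n|\sim \kappa^{-n-1}/A'(\kappa)$, and so the acceptance probability tends to $1/(\kappa A'(\kappa))=1/\barw>0$. I expect the aperiodicity step to be the only place requiring real care; everything else is a routine application of analytic combinatorics.
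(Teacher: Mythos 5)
The paper does not prove this statement itself: it is cited as a fact from Duchon et al.~\cite{DFLS}, so there is no internal proof to compare against. Your reconstruction is the standard argument, and its essentials are correct. The memorylessness computation showing that the first phase produces each $W\in\A^\ast_n$ with the same probability $\kappa^n$ (because every proper prefix of $W$ has weight $<n$, so the stopping rule cannot trigger early) is exactly right, as is the reduction of the acceptance probability to $\kappa^n|\A^\ast_n|$ and the meromorphic transfer giving $\kappa^n|\A^\ast_n|\to 1/(\kappa A'(\kappa))=1/\barw>0$.

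One step is misstated, and you yourself flag it as the delicate point. You assert that aperiodicity, i.e.\ $\gcd\{\weight(a):a\in\A\}=1$, implies $|A(\kappa e^{i\theta})|<A(\kappa)=1$ for all $\theta\not\equiv 0\pmod{2\pi}$. This is false in general: if the weights are $\{1,3\}$ (an aperiodic set), then at $\theta=\pi$ the arguments $\theta\cdot 1=\pi$ and $\theta\cdot 3\equiv\pi$ \emph{do} all align, and $|A(-\kappa)|=\kappa+\kappa^3=A(\kappa)=1$. The inequality $|A(\kappa e^{i\theta})|<A(\kappa)$ for all $\theta\neq0$ would require the stronger hypothesis that $\gcd\{\weight(a)-\weight(b):a,b\in\A\}=1$, which the paper does not assume. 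What you actually need, and what aperiodicity in the paper's sense \emph{does} deliver, is the weaker conclusion $A(\kappa e^{i\theta})\neq 1$ for $\theta\not\equiv 0$: if $A(\kappa e^{i\theta})=1$, then since $|A(\kappa e^{i\theta})|\le A(\kappa)=1$ with equality, every term $\kappa^{\weight(a)}e^{i\theta\weight(a)}$ must be real and positive, forcing $\theta\weight(a)\equiv 0\pmod{2\pi}$ for every $a$, and then $\gcd\{\weight(a)\}=1$ gives $\theta\equiv 0$. Since the poles of $A^\ast(x)=1/(1-A(x))$ on $|x|=\kappa$ are exactly the solutions of $A(x)=1$, this suffices to make $\kappa$ the unique dominant singularity, and the rest of your transfer argument goes through unchanged.
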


We will use the Boltzmann sampler to obtain an insight into the structure of a typical word in 
$\A^\ast_n$ as $n$ tends to infinity. For a word $P=p_1p_2\dotsb p_k$, a \emph{$P$-block}
in another word $W=w_1w_2\dotsb w_n$ is a sequence $w_{j+1}w_{j+2}\dotsb w_{j+k}$ of consecutive 
letters such that $p_i=w_{j+i}$ for every $i=1,\dotsc,k$.

\begin{proposition}\label{pro-occur}
For every word $P=p_1p_2\dotsb p_k$ from $\A^\ast$, there is an $\varepsilon\equiv 
\varepsilon_P>0$ such that, with probability at least $1-\frac{1}{2^{\Omega(n)}}$, a uniformly 
random word $X\in\A^\ast_n$ contains at least $\varepsilon n$ pairwise disjoint $P$-blocks.
\end{proposition}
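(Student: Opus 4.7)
My plan is to analyze the Boltzmann sampler with rejection from the preceding Fact, which generates a uniformly random element of $\A^\ast_n$. Set $q = \kappa^{\weight(P)} = \prod_{i=1}^k \bbP(p_i) > 0$; this is precisely the probability that a fixed window of $k$ consecutive independent letters from $\A$ (under the measure $\bbP(a) = \kappa^{\weight(a)}$) spells out the word~$P$. The strategy is to show many disjoint $P$-blocks appear already in the \emph{unconditioned} first phase of the sampler, and then pass this bound through the $\Omega(1)$-probability rejection step.

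First I would control the stopping time of the first phase. Let $a_1, a_2, \dotsc$ be the i.i.d.~letters generated by the sampler and $L = \min\{i : \weight(a_1 \cdots a_i) \ge n\}$ the stopping index. Set $M = \lfloor n/(2\barw) \rfloor$. Then $L < M$ forces $\weight(a_1 \cdots a_{M-1}) \ge n \ge 2(M-1)\barw$, which by Proposition~\ref{pro-con} (applied with $\varepsilon = 1$ to the first $M-1$ letters) happens with probability at most $e^{-\delta(M-1)} = e^{-\Omega(n)}$. Hence $L \ge M$ except with exponentially small probability.

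Next, partition the positions $1, 2, \dotsc, M$ into $\lfloor M/k \rfloor$ consecutive disjoint windows of length $k$, and let $Z_j \in \{0,1\}$ indicate that the $j$-th window equals $P$. The variables $Z_j$ are i.i.d.~Bernoulli$(q)$, so a standard Chernoff bound gives $\sum_j Z_j \ge q \lfloor M/k \rfloor / 2$ with probability $1 - e^{-\Omega(M)} = 1 - e^{-\Omega(n)}$. A union bound with the previous step shows that in the unconditioned first phase, the word $a_1 \cdots a_L$ contains at least $qM/(2k) = \Omega(n)$ pairwise disjoint $P$-blocks, except with probability $e^{-\Omega(n)}$.

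Finally I would transfer the bound to the uniform distribution on $\A^\ast_n$. The Fact guarantees that the second phase succeeds---i.e., $\weight(a_1 \cdots a_L) = n$---with probability $\Omega(1)$, and conditioned on this, $a_1 \cdots a_L$ is uniform on $\A^\ast_n$. Conditioning on an event of probability $\Omega(1)$ inflates failure probabilities by at most a constant factor, so the $e^{-\Omega(n)}$ bound survives and we may take any $\varepsilon_P < q/(4k\barw)$. The main technical obstacle is the mismatch between Proposition~\ref{pro-con} (which controls the weight of \emph{fixed-length} words) and the objects we actually need (the stopping time $L$ and a word of \emph{fixed weight}~$n$); in both cases the remedy is to rephrase the target event in terms of a fixed-length prefix and to absorb the fixed-weight conditioning via the constant rejection probability of the sampler.
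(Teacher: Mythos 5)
Your proof is correct and follows essentially the same strategy as the paper's: both run the Boltzmann sampler, set the threshold $m=\lfloor n/(2\barw)\rfloor$ and use Proposition~\ref{pro-con} to show the first phase's stopping index is at least $m$ with all but exponentially small probability, partition the first $m$ positions into disjoint length-$k$ windows and apply Chernoff to count $P$-blocks, and finally transfer to the uniform measure on $\A^\ast_n$ by paying only a constant factor from the $\Omega(1)$ rejection-success probability. The only cosmetic difference is that you phrase the first phase in terms of an unbounded i.i.d.~stream with a stopping time, whereas the paper fixes a length-$n$ i.i.d.~prefix; these are equivalent.
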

\begin{proof} 
The first phase of the Boltzmann sampler can equivalently be implemented by the 
following procedure: first choose a sequence $V=a_1a_2\dotsb a_n$ of $n$ random independent letters 
from~$\A$, and then output the word $W=a_1a_2\dotsb a_i$ determined as the shortest prefix of $V$ 
of weight at least~$n$. 

Recall that $\barw$ is the expected weight of a letter in~$\A$. Define $m=\left\lfloor 
\frac{n}{2\barw}\right\rfloor$. We will consider two possible `bad' outcomes of the above random 
procedure: the first bad outcome is that the length $i$ of $W$ is smaller than $m$, the second bad 
outcome is that the prefix of $V$ of length $m$ has fewer than $\varepsilon n$ pairwise disjoint 
$P$-blocks, for an $\varepsilon>0$ to be specified later. We will show that both bad outcomes have 
exponentially small probability. Clearly, if neither of the two bad outcomes occurs, then the first 
phase of the Boltzmann sampler generates a word with at least $\varepsilon n$ pairwise disjoint 
$P$-blocks, and since the second phase succeeds with constant probability, this implies that only an 
exponentially small fraction of the words in $\A^\ast_n$ contain fewer than $\varepsilon n$ 
pairwise disjoint $P$-blocks.

The probability of the first bad outcome (i.e., $i<m$) is at most as large as the probability that 
the first $m$ letters of $V$ have weight at least $n\ge 2m\barw$, which is exponentially small by 
Proposition~\ref{pro-con}.

To estimate the probability of the second bad outcome, let $Y$ be the prefix of $V$ of length~$m$. 
Define $q=\left\lfloor \frac{m}{k}\right\rfloor$, and partition $Y$ into subwords as $Y_1Y_2\dotsb 
Y_qZ$, where $Y_1,\dotsc,Y_q$ all have length exactly $k$, and $Z$ is a possibly empty word of 
length at most~$k-1$. The words $Y_1,\dotsc,Y_q$ are pairwise independent, and each of them is a 
random word of length~$k$. In particular, there is a positive probability $\delta>0$ depending on 
$P$ such that for every $j\in[q]$ we have $\bbP(Y_j=P)=\delta$.

There are, therefore, on average $\delta q$ values of $j$ for which $Y_j=P$, and each such value 
corresponds to a $P$-block. By the standard Chernoff--Hoeffding bound~\cite{Hoeffding}, the 
probability that there are fewer than $\delta q/2$ values of $j$ satisfying $Y_j=P$ is exponentially 
small in $q$, and therefore also in~$n$.

We conclude that the word output by the sampler, which is a uniformly random word from $\A^\ast_n$, 
contains at least $\delta q/2$ disjoint $P$-blocks, up to exceptions of exponentially small 
probability. Since $q = \Omega(n)$ this completes the proof.
\end{proof}

\section{Wilf collapse}
\label{sec-wilf}

In this section, we will present our two main results demonstrating Wilf collapse in a permutation 
class $\C$ under different sets of assumptions. The two results deal with sum-closed classes and 
with classes with finitely many sum-indecomposables, respectively. Their proofs all follow the same 
general strategy:
\begin{itemize}
\item
Represent the elements of $\C$ as words over the alphabet consisting of the sum-indecomposable 
permutations in $\C$.
\item
Using Corollary \ref{cor-incompatible-shuffle} characterise some ``good'' elements of $\C$ whose 
equivalence classes with respect to $\we_{\C}$ are ``large'',
 \item
Show that permutations in $\C$ are bad with ``sufficiently small'' probability.
\end{itemize}

The number of Wilf classes for $\C$ among elements of size $n$ is bounded above by the sum of the 
number of good permutations in $\C_n$ divided by the smallest size of a good Wilf class, and the 
number of bad permutations in $\C_n$. Therefore, the scheme above is sufficient to prove a Wilf 
collapse provided that ``large'' implies tending to infinity, and ``sufficiently small'' means 
tending to 0. To obtain an exponential Wilf collapse it is sufficient that ``large'' should mean 
``of exponential size'' and that the probability of a permutation being bad is exponentially small.

\subsection{Sum-closed classes}
\label{ssec-sumcl}

Let $\C$ be a sum-closed class. Take the alphabet $\A$ to consist of the sum-indecomposable 
permutations of $\C$ with the weight of a letter simply being equal to its size. Then we already 
have an obvious bijection between $\C$ and $\A^\ast$ which we now treat as implicit, i.e., we make 
no distinction between a permutation in $\C$ and its representation as (the sum of) a sequence of 
sum-indecomposable permutations. In particular, we say that $\C$ is supercritical whenever 
$\A^\ast$ is. We also extend the containment order on $\C$ to words of $\A^\ast$; that is, for 
$W,V\in\A^\ast$ we write $W\leq V$ if the permutation represented by $W$ is contained in the 
permutation represented by~$V$. Observe that this partial order on $\A^\ast$ is an 
embedding order.

\begin{theorem}
\label{thm-sumclosed}
Any supercritical sum-closed class, $\C$,  that contains an incompatible pair has an exponential Wilf collapse, unless $\C$ is the class of increasing permutations.
%If $\C$ is a supercritical sum-closed class, other than the class of increasing permutations, and there exists an incompatible pair $(a,b)$, then $\C$ 
%has an exponential Wilf collapse, unless $\C$ is the class of increasing permutations.
\end{theorem}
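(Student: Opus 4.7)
The plan is to apply the general scheme outlined at the start of Section~\ref{sec-wilf}: find a single short pattern $P\in\A^\ast$ whose internal occurrences of the incompatible pair $(a,b)$ decompose $P$ into pieces of at least two distinct types. Proposition~\ref{pro-occur} will then guarantee linearly many disjoint copies of $P$ in a typical random word, and splitting at their internal $ab$-pairs produces an incompatible factorisation whose middle factors admit exponentially many reshufflings. By Corollary~\ref{cor-incompatible-shuffle}, all these reshufflings sit inside the Wilf class of the original word. Before we start, note that the hypotheses of Proposition~\ref{pro-occur} are met: supercriticality is assumed, and aperiodicity holds because $\C$ contains the identity of size~$1$ (a sum-indecomposable of weight~$1$). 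Since $\C$ is not the class of increasing permutations, $\A$ contains some further letter; fix an arbitrary $c\in\A$ (even the choice $c=a$ or $c=b$ will do).

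Now take $P$ to be the word $a\,b\,a\,b\,c\,a\,b\in\A^\ast$, whose internal $ab$-occurrences lie at positions $(1,2),(3,4),(6,7)$. Splitting $P$ between each $a$ and the following $b$ decomposes $P$ into the four pieces $a\mid ba\mid bca\mid b$. By Proposition~\ref{pro-occur}, there is $\varepsilon>0$ such that, with probability $1-2^{-\Omega(n)}$, a uniformly random $W\in\A^\ast_n$ contains at least $r=\lceil\varepsilon n\rceil$ pairwise disjoint $P$-blocks. For such a $W$, I would split at every $ab$ inside the chosen blocks: every split occurs at the incompatible pair $(a,b)$, so the result is an incompatible factorisation. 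Besides an initial and a final factor, its middle factors comprise $r$ copies of $ba$, $r$ copies of $bca$, and $r-1$ interstitial factors joining consecutive $P$-blocks. Crucially, every middle factor starts with $b$ and ends with $a$, so any permutation of them again yields an incompatible factorisation.

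Corollary~\ref{cor-incompatible-shuffle}, applied with $V_i=W_i$, then certifies that every such reshuffled word is Wilf-equivalent to $W$ in $\C$. Confining the reshuffling to permutations that fix the interstitial factors and merely redistribute the $r$ copies of $ba$ and the $r$ copies of $bca$ among their $2r$ fixed slots yields $\binom{2r}{r}=2^{\Omega(n)}$ pairwise distinct words, which by uniqueness of the sum decomposition correspond to pairwise distinct permutations inside the Wilf class of $W$. Hence with probability $1-2^{-\Omega(n)}$ a uniformly random element of $\C_n$ lies in a Wilf class of exponential size, giving $w_n\le c_n\cdot 2^{-\Omega(n)}$ and the desired exponential Wilf collapse. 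The part of the argument most deserving of care is the simultaneous abundance of two distinct middle-factor types; packaging both into a single super-pattern~$P$ and invoking Proposition~\ref{pro-occur} only once bypasses any need for joint occurrence estimates over multiple patterns.
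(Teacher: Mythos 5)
The core strategy here mirrors the paper's almost exactly: build a fixed pattern $P$ whose internal $ab$-splits yield an incompatible factorisation, use Proposition~\ref{pro-occur} to get $\Omega(n)$ disjoint $P$-blocks in a typical word, and then reshuffle the incompatible middle factors via Corollary~\ref{cor-incompatible-shuffle}. So you are not following a different route; the differences are confined to the concrete choice of $P$ and the final counting.

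There is a genuine gap, however, and it sits precisely where you relax a hypothesis that the paper is careful about. You write that ``even the choice $c=a$ or $c=b$ will do,'' but an incompatible pair can have $a=b$ (for example, in the sum-closure of $\{1,21\}$, the pair $(1,1)$ is incompatible because $1\oplus 1=12$ embeds in neither $1$ nor $21$). If $a=b=c$, then $P=a^7$, the two middle factor types are $aa$ and $aaa$, and the interstitial factors are also $a\,I_k\,a$. In that case the reshuffled word is determined only by the lengths $|\sigma_{2j-1}\sigma_{2j}|\in\{4,5,6\}$ for $j=1,\dots,r$, so the $\binom{2r}{r}$ reshuffles collapse to only polynomially many distinct words; for instance $(aa,aaa,aa,aaa)$ and $(aa,aaa,aaa,aa)$ produce identical concatenations. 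That is not enough for an \emph{exponential} Wilf collapse, which is what the theorem asserts. The paper sidesteps this entirely by choosing \emph{two distinct} letters $c\neq d$ (available because $|\A|\geq 2$ once $\C\neq\Av(21)$) and replacing whole $P$-blocks $abcabdab$ by $P'$-blocks $abdabcab$; since $P\neq P'$ and the blocks sit in disjoint fixed positions, the $2^{\varepsilon n}$ distinctness is immediate. Even in your non-degenerate cases (where $c$ differs from $a$ when $a=b$), the $\binom{2r}{r}$ distinctness is asserted rather than proved: it does hold, but only via a greedy left-to-right reconstruction of the $\sigma_i$ that crucially uses the fact that every following factor starts with $b$, and that observation needs to appear. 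The cleanest repair is to insist that $c$ be chosen distinct from $a$ (possible since $|\A|\geq 2$), or better, to adopt the paper's two-letter pattern and restrict to in-block swaps, which makes the injectivity trivial.
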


\begin{proof}
Let $\C$ be a supercritical sum-closed class containing an incompatible pair $(a,b)$ and not equal to the class of increasing permutations. The set, $\A$, of sum-indecomposable permutations in $\C$ has at least the two elements $1$ 
and~$21$. Let $c$ and $d$ be two arbitrary distinct 
elements from~$\A$. Consider the words $X=bca$, $Y=bda$, $P=aXYb=abcabdab$, and $P'=aYXb=abdabcab$. 
Notice that both $P=aXYb$ and $P'=aYXb$ are incompatible factorisations.

By Proposition~\ref{pro-occur}, there is an $\varepsilon>0$ such that for every $n$, a uniformly 
random permutation $\pi\in\C$ of order $n$ has a sum decomposition in which there are at least 
$\varepsilon n$ disjoint $P$-blocks, except for an exponentially small fraction of `bad' 
permutations. By Corollary~\ref{cor-incompatible-shuffle}, if a permutation $\pi'$ is obtained from 
$\pi$ by replacing some $P$-blocks by $P'$-blocks, then $\pi$ and $\pi'$ are Wilf-equivalent. In 
particular, the Wilf class of a permutation that has at least $\varepsilon n$ disjoint $P$-blocks 
has size at least $2^{\varepsilon n}$. The theorem follows.
\end{proof}

\begin{corollary}
\label{cor-sumclosedfinitealphabet}
If $\C$ is a sum-closed class that contains 21 and has only finitely many sum-indecomposable 
permutations, then $\C$ has an exponential Wilf collapse.
\end{corollary}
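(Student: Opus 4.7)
The plan is to deduce the corollary directly from Theorem~\ref{thm-sumclosed}, by verifying each of its three hypotheses in turn: supercriticality, existence of an incompatible pair, and that $\C$ is not the class of increasing permutations.

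First, since $\C$ contains $21$, the alphabet $\A$ of sum-indecomposable permutations of $\C$ contains both $1$ and $21$, so $\C \neq \Av(21)$. This already rules out the exceptional case of Theorem~\ref{thm-sumclosed}. Next, since $\A$ is finite, the generating function $A(x) = \sum_{a \in \A} x^{\weight(a)}$ is a polynomial in $x$, and in particular is entire, so its radius of convergence $\rho_A$ is $+\infty$. Because $A$ is a non-constant polynomial with positive coefficients (it contains at least the monomials $x$ and $x^2$ coming from $1$ and $21$), we have $\lim_{x \to \infty} A(x) = \infty > 1$, so $\A^\ast$ is supercritical in the sense of Subsection~\ref{ssec-uniform}.

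The remaining, slightly less automatic point, is to exhibit an incompatible pair in $\A$. Let $M = \max_{c \in \A} |c|$, which is finite because $\A$ is finite, and let $a \in \A$ be a letter attaining this maximum. Take $b = 1 \in \A$. The word $ab$ then corresponds to the permutation $a \oplus 1$, which has size $M+1$. Since every letter $c \in \A$ has $|c| \leq M < M+1 = |a \oplus b|$, no $c \in \A$ can contain $a \oplus b$ as a pattern; in embedding-order terms, $ab \not\leq c$ for any $c \in \A$. Hence $(a,b)$ is an incompatible pair.

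With the three hypotheses of Theorem~\ref{thm-sumclosed} in place, the theorem applies and yields an exponential Wilf collapse for $\C$, which is exactly the conclusion of the corollary. The only step that required any thought is the existence of an incompatible pair, but even there the argument is a simple counting observation made available by the finiteness of $\A$; I do not anticipate any genuine obstacle.
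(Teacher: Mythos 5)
Your proof is correct and follows essentially the same route as the paper: the paper also observes that supercriticality is automatic for a finite alphabet and exhibits the incompatible pair as $1$ together with a sum-indecomposable permutation of maximum size, then invokes Theorem~\ref{thm-sumclosed}. Your write-up simply spells out the supercriticality check and the size-counting argument for incompatibility in more detail, and explicitly notes that $21 \in \C$ rules out the exceptional case.
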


\begin{proof}
Such a class is clearly supercritical. It also has an incompatible pair, e.g., 1 and any 
sum-indecomposable permutation of maximum size, so Theorem~\ref{thm-sumclosed} applies.
\end{proof}

\subsection{Classes with finitely many sum-indecomposables}\label{ssec-finin}

The aim of this subsection is to prove the following result.
\begin{theorem}\label{thm-finind}
Any permutation class with finitely many sum-indecomposable permutations and an unbounded growth 
function exhibits a Wilf collapse.
\end{theorem}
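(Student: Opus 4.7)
I would begin by encoding elements of $\C$ as words over the finite alphabet $\A$ of sum-indecomposable permutations of $\C$, with weight equal to permutation size, and letting $L\subseteq\A^\ast$ be the sublanguage whose corresponding sums lie in $\C$. Because $\C$ is hereditary and omitting letters of a word yields a subpermutation of the corresponding sum, $L$ is downward closed in the embedding order of Section~\ref{sec-words}. Since $\C$ has unbounded growth while $\A$ is finite, $L$ is an infinite language over a finite alphabet, and K\"onig's lemma applied to the tree of prefixes of words of $L$ produces an infinite sequence of letters of $\A$ whose every finite prefix lies in $L$.

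From this infinite sequence a Ramsey-type argument isolates a non-empty recurring factor with arbitrarily many copies inside words of $L$, and more specifically a pair of letters that can be repeated in a pumpable context. We cannot have $\A=\{1\}$ (else $\C$ is the class of increasing permutations, with bounded growth), so we may choose $a,b\in\A$ whose combined size $|a|+|b|$ exceeds $\max_{e\in\A}|e|$, making the ordered pair $(a,b)$ automatically incompatible; we also pick distinct letters $c,d\in\A$, handling edge cases (e.g.~$|\A|=1$) by small modifications of the gadget. Setting $P=abcabdab$ and $P'=abdabcab$ exactly as in the proof of Theorem~\ref{thm-sumclosed}, the pumping structure guarantees that both $P$ and $P'$ belong to $L$, and Corollary~\ref{cor-incompatible-shuffle} gives $I_P(x)=I_{P'}(x)$ in the ambient $\A^\ast$.

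The main obstacle, and the technical heart of the proof, is to transfer this generating-function equivalence from $\A^\ast$ to Wilf equivalence inside the strict sublanguage $L$. I would approach this by interpreting Corollary~\ref{cor-incompatible-shuffle} bijectively: the greedy embedding of Proposition~\ref{prop-greed-is-good} associates to every container of $P$ a minimal containing prefix, and swapping the roles of $P$ and $P'$ in this prefix gives a size-preserving involution on words that is purely local. Because the boundaries of $P$ and $P'$ involve the incompatible pair $(a,b)$, no basis element of $\C$ can straddle the swap, so the involution restricts to a bijection between $\{V\in L:P\le V\}$ and $\{V\in L:P'\le V\}$ at every weight, giving $P\we_\C P'$. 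An analogous argument handles simultaneous independent swaps, producing a Wilf class of size $2^k$ whenever a word of $L$ contains $k$ disjoint $P$-blocks. Finally, adapting the sampling argument of Proposition~\ref{pro-occur} to the restricted language $L$, I would show that a uniformly random $\pi\in\C_n$ contains $\omega(1)$ disjoint $P$-blocks in its sum decomposition with probability $1-o(1)$, so its Wilf class has size tending to infinity and $w_n=o(c_n)$ as required.
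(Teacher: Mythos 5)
Your high-level strategy (encode $\C$ as a language $L\subseteq\A^\ast$, find an incompatible pair, apply Corollary~\ref{cor-incompatible-shuffle} inside a pumpable region, then show a random word offers many swap opportunities) is the right framework, and it matches the paper's in spirit. But the proposal has several genuine gaps, and it misses a whole case.

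First, choosing $a,b$ to be letters with $\weight(a)+\weight(b) > K$ (the maximum letter weight) does make $(a,b)$ incompatible, but this choice is disconnected from the ``pumpable context'' produced by your K\"onig/Ramsey argument. There is no reason the gadget $P=abcabdab$ or $P'=abdabcab$ should belong to~$L$: for example, if $\A=\{1,21\}$ and $\C$ avoids $21\oplus 21$, then $a=b=21$ yields a $P$ not even in~$\C$. More generally, the letters of maximal size may appear only boundedly often in any element of~$\C$, so a random permutation has no hope of containing $\Omega(n)$ disjoint $P$-blocks. The paper resolves this by working only inside a dominant \emph{loop alphabet}~$\L_p$, picking $a$ maximal \emph{within}~$\L_p$, and pairing $a$ with a second letter $b\in\L_p$; the incompatibility of $(a,b)$ then follows from maximality in $\L$, not global size.

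Second, the assertion that ``no basis element of $\C$ can straddle the swap'' because $(a,b)$ is incompatible is unsupported and, as stated, false. A forbidden word $\phi\in F$ can certainly straddle the boundary of a $P$-block after replacement. The paper's mechanism for preventing this is the \emph{matching} condition (Lemmas~\ref{lem-match1}--\ref{lem-tight}): the swap happens strictly inside a free slice whose flanking prefix state $p$ and suffix state $s$ match, which forces all symbols of the middle part to stay in the common loop alphabet $\L_p=\coL_s$ and guarantees the swap cannot create an occurrence of any~$\phi^i$. Establishing that a random $W\in\C_n$ is \emph{typical} in this sense is exactly the technical content of Lemma~\ref{lem-typ2}; ``adapting Proposition~\ref{pro-occur} to $L$'' hides that whole analysis (equitable partitions, free slices, Proposition~\ref{pro-growth}, Lemmas~\ref{lem-nondom}, \ref{lem-xydom}).

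Third, and decisively, you do not address the unbounded polynomial case $\gamma=1$. There the dominant loop alphabets are $\{1\}$, so within a loop block there is no incompatible pair at all and no $P$-block containing a letter other than $1$; a random $W\in\C_n$ has only $O(1)$ symbols different from~$1$. Your gadget cannot appear $\omega(1)$ times, so the argument gives nothing. The paper handles this case with a completely different local modification: moving copies of the letter $1$ between two distinct dominant loop blocks (Proposition~\ref{pro-poly}), which only yields a Wilf class of size $\Theta(n)$ rather than exponential. Without something like that second argument, the proof of the theorem is incomplete.
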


The proof of this result is rather technical so we will begin with a few words about its general 
strategy. Let $\C$ be a permutation class with finitely many indecomposables and an unbounded growth 
function. As in Subsection~\ref{ssec-sumcl}, we will represent the elements of $\C$ as words over 
the alphabet $\A$ of indecomposable elements of~$\C$. However, not all words from $\A^\ast$ now 
correspond to elements of~$\C$, so we cannot directly use the properties of $\A^\ast$ to prove the 
Wilf collapse of~$\C$. Instead, we consider a finite state automaton over $\A$ that accepts 
only the words which represent elements of~$\C$. The underlying graph of this automaton is directed 
and acyclic except for loops on certain states. These loops represent certain sum-closed subclasses 
of $\C$ whose elements occur as consecutive subwords (``loop blocks'') within the elements of~$\C$. 

Since the classes corresponding to loop blocks are sum-closed, 
Corollary~\ref{cor-sumclosedfinitealphabet} applies to them, and so, unless the only symbol that 
allows for a loop at a given state is $1$, they have exponential Wilf collapse. We will then show 
that under suitable technical assumptions, which a random permutation of $\C$ satisfies with high 
probability, the Wilf equivalences within the class generated by a loop block can be lifted to Wilf 
equivalences for the whole class~$\C$.

We begin with a general lemma dealing with the growth rate of a set of words generated by a finite 
alphabet.

\begin{lemma}\label{lem-grow} Let $\L$ be a finite alphabet, with every letter $a\in\L$ having a 
positive integer weight $\weight(a)$. Let $k$ be the largest weight of a letter of~$\L$, and let 
$\alpha_i$ be the number of letters in $\L$ of weight $i$, for $i=1,\dotsc,k$. Assume that 
$\alpha_1=1$, i.e., there is a unique letter of weight 1. Let $L(x)=\sum_{i=1}^k \alpha_ix^i$ be the 
generating polynomial of~$\L$. 
%Recall that $\L^\ast_n$ is the set of words in $\L^\ast$ of weight~$n$.

The polynomial $1-L(x)$ has a unique positive real root $\rho$, this root belongs to the interval 
$(0,1]$, has multiplicity 1, and any other complex root $\lambda$ of $1-L(x)$ satisfies 
$|\lambda|>\rho$. Moreover, $\rho$ is equal to $1$ if and only if $|\L|=1$. There are constants 
$c>0$ and $\varepsilon>0$ such that $|\L^\ast_n| =c\rho^{-n} + O( (\rho+\varepsilon)^{-n})$.
\end{lemma}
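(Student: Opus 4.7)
The plan is to analyze the polynomial $1-L(x)$ directly and then read off the asymptotics of $|\L^\ast_n|$ from the rational generating function $A^\ast(x)=\sum_n |\L^\ast_n| x^n = 1/(1-L(x))$, which is standard from analytic combinatorics; the slightly non-routine part is showing that $\rho$ is strictly less in modulus than every other complex root of $1-L$.

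First I would handle the existence, uniqueness, multiplicity, and location of the positive real root. Since all coefficients $\alpha_i$ are non-negative and $\alpha_1=1>0$, the function $L$ is strictly increasing on $[0,\infty)$, with $L(0)=0$ and $L(x)\to\infty$. By the intermediate value theorem there is exactly one $\rho>0$ with $L(\rho)=1$. Because $L(1)=\sum_i\alpha_i=|\L|\ge 1$, monotonicity gives $\rho\le 1$, with equality iff $|\L|=1$. The derivative $L'(\rho)\ge \alpha_1=1>0$, so $1-L$ vanishes to order exactly one at $\rho$.

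Next I would show that every other complex root $\lambda$ of $1-L$ satisfies $|\lambda|>\rho$. Suppose for contradiction that $|\lambda|\le\rho$ and $L(\lambda)=1$. The triangle inequality gives
\[
1=|L(\lambda)|=\left|\sum_{i=1}^k \alpha_i\lambda^i\right|\le \sum_{i=1}^k\alpha_i|\lambda|^i=L(|\lambda|)\le L(\rho)=1,
\]
so equality holds throughout. The second equality forces $|\lambda|=\rho$ by strict monotonicity of $L$ on $(0,\infty)$. The first equality forces all nonzero terms $\alpha_i\lambda^i$ to have the same argument, and since their sum is the positive real number $1$, each of them is a positive real. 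In particular, the $i=1$ term $\alpha_1\lambda=\lambda$ is positive real, so $\lambda=|\lambda|=\rho$, contradicting $\lambda\ne\rho$. This is the step I consider the main (though still mild) obstacle, and it is exactly where the hypothesis $\alpha_1=1$ is used.

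Finally I would extract the asymptotics. The generating function of $\L^\ast$ is $A^\ast(x)=1/(1-L(x))$, a rational function whose only singularities are the roots of $1-L$. By the previous step, $\rho$ is a simple pole and is strictly closer to the origin than any other pole; choose $\varepsilon>0$ so that $\rho+\varepsilon$ is strictly less than the modulus of every other root. Partial fraction decomposition (or equivalently the transfer theorem for meromorphic functions) yields
\[
A^\ast(x)=\frac{c'}{\rho-x}+R(x),
\]
where $c'=1/L'(\rho)>0$ and $R(x)$ is analytic on the disc of radius $\rho+\varepsilon$. Extracting coefficients, $[x^n]\frac{c'}{\rho-x}=\frac{c'}{\rho}\rho^{-n}$, so setting $c=c'/\rho=1/(\rho L'(\rho))>0$, and bounding $[x^n]R(x)=O((\rho+\varepsilon)^{-n})$ by Cauchy's inequalities, gives the claimed estimate $|\L^\ast_n|=c\rho^{-n}+O((\rho+\varepsilon)^{-n})$.
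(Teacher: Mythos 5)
Your proof is correct and follows essentially the same approach as the paper: the same monotonicity argument locates and isolates $\rho$, the same chain of equalities rules out other roots of modulus $\le\rho$ (you apply the triangle inequality to $|L(\lambda)|$ where the paper takes real parts of $\sum\alpha_i\lambda^i$, but these are equivalent, and both hinge on $\alpha_1>0$), and the same singularity analysis of the rational function $1/(1-L(x))$ yields the asymptotics.
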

\begin{proof}
Noting that $\alpha_1=1$ and $\alpha_2,\dotsc,\alpha_k$ are all nonnegative, we observe that 
$1-L(0)=1$ and $1-L(1)\le 0$, and therefore there is a $\rho\in(0,1]$ such that $1-L(\rho)=0$. We 
also see that $\rho=1$ if and only if $L(x)=x$, or equivalently, $|\L|=1$. 
Since the derivative of $1-L(x)$ is negative for every $x>0$, we conclude that 
$\rho$ is the unique positive root of $1-L(x)$, and that it has multiplicity 1. 

Suppose now that $\lambda$ is a complex root of $1-L(x)$, with $|\lambda|\le\rho$. We claim 
that $\lambda=\rho$. Let $\Re(z)$ denote the real part of a complex number~$z$. We then obtain
\[
1=\sum_{i=1}^k \alpha_i\lambda^i=\sum_{i=1}^k \alpha_i\Re(\lambda^i)\le \sum_{i=1}^k
\alpha_i|\lambda|^i\le\sum_{i=1}^k \alpha_i\rho^i=1.
\]
In particular, all the inequalities hold with equality. Since for each $i\in[k]$, we have 
$\Re(\lambda^i)\le |\lambda|^i\le\rho^i$, and since $\alpha_1>0$, we easily deduce that 
$\lambda=\rho$.

The generating function of $\L^\ast$ is $L^\ast(x)=1/(1-L(x))$. We see that $L(x)$ is a rational 
function that has a simple pole at $x=\rho$, and any other pole has absolute value greater than 
$\rho+\varepsilon$, for some $\varepsilon>0$. From this, the asymptotics of $|\L^\ast_n|$ follow, 
by standard singularity analysis~\cite{Flajolet}.
\end{proof}

We call the value $\rho^{-1}$ from the previous lemma the \emph{growth rate} of $\L^\ast$.

Suppose from now on that $\C$ is a permutation class with finitely many sum-indecomposable 
permutations, and that its growth function is unbounded. Let $\A$ be the set of sum-indecomposable 
permutations in~$\C$. By the above assumptions, $\A$ is finite and contains at least two distinct 
elements, namely $1$ and $21$. Let $K$ denote, from now on, the largest weight of a letter of~$\A$.

As in Subsection~\ref{ssec-sumcl}, we will represent the permutations in $\C$ as words over the 
alphabet~$\A$, and assign to each letter of $\A$ the weight equal to the size of the corresponding 
sum-indecomposable permutation. 
%As noted previously, it follows from Theorem~\ref{thm-sumclosed} that when $\C$ is 
%sum-closed, it exhibits an exponential Wilf collapse
%: clearly $\A^*$ is supercritical for any 
% nontrivial finite $\A$, and a finite alphabet contains an incompatible pair of symbols. 
Since Corollary \ref{cor-sumclosedfinitealphabet} deals with the sum-closed case, suppose 
from now on that $\C$ is not sum-closed, i.e., not every word in $\A^\ast$ corresponds to a 
permutation from~$\C$. 

Let $F$ be the set of minimal elements of $\A^\ast$ not belonging to $\C$; in particular, we have 
$\C=\A^\ast\cap\Av(F)$. By the classical Higman Lemma~\cite{Higman}, the set $F$ is finite. Let us 
write $f=|F|$ and $F=\{\phi^1,\phi^2,\dotsc,\phi^f\}$. The forbidden permutations $\phi^i$ will 
again be interpreted as words over~$\A$. We let $\ell_i$ denote the number of symbols of the 
word~$\phi^i$.

For a word $W=a_1a_2\dotsb a_m\in\A^\ast$, let $W_{\le i}$ denote its prefix $a_1a_2\dotsb 
a_i$, and $W_{\ge i}$ its suffix $a_ia_{i+1}\dotsb a_m$. We will also use the notation $W_{<i}$ and 
$W_{>i}$ for $W_{\le i-1}$ and $W_{\ge i+1}$, respectively.

Let $W\in\A^\ast$ be a word. The \emph{prefix state} of $W$ is the $f$-tuple 
$(p_1,p_2,\allowbreak\dotsc,p_f)$ where $p_i$
%\in[\ell_i]_0$ 
is the length of the longest prefix of 
$\phi^i$ that is contained in~$W$, or in other words, for every $i=1,\dotsc,f$, the word
$W$ contains $\phi^i_{\le p_i}$ but avoids~$\phi^i_{\le p_i+1}$. 

We say that the prefix state $p=(p_1,\dotsc,p_f)$ is \emph{valid}, if $p_i<\ell_i$ for each~$i$. 
Observe that a word $W\in\A^\ast$ is in $\C$ if and only if its prefix state is valid. Let $\P$ be 
the set of all possible prefix states of the elements of~$\C$. The empty word has prefix state 
$(0,0,\dotsc,0)$, which we will call \emph{the initial prefix state}.

For two prefix states $p=(p_1,\dotsc,p_f)$ and $p'=(p'_1,\dotsc,p'_f)$, we write $p\le p'$ if 
$p_i\le p'_i$ for every $i\in[f]$, and we write $p<p'$ if $p\le p'$ and~$p\neq p'$.

Let $X$ be a word with a prefix state $p$, and let $a\in\A$ be a symbol. The prefix state of 
the word $Y=Xa$ is then uniquely determined by $p$ and~$a$. Moreover, if $p'$ the prefix state of 
$Y$, then $p\le p'$. If $p=p'$, we say that the symbol $a$ is a \emph{loop symbol for $p$}, 
otherwise we say that $a$ is a \emph{transition symbol from $p$ to~$p'$}. The \emph{loop alphabet of 
$p$}, denoted $\L_p$, is the set of the loop symbols of~$p$. 

A symbol $a\in\A$ is in the loop alphabet of a valid prefix state $p=(p_1,\dotsc,p_f)$ if 
and only if for each $i=1,\dotsc,f$, the $(p_i+1)$-st symbol of $\phi^i$ is not contained in~$a$. 
In particular, the loop alphabet of $p$ is a down-set of~$\A$. 

Consider a word $W=a_1a_2\dotsb a_m$ from $\C$. We will say that $W$ has a \emph{prefix transition} 
at position $i$ if $W_{<i}$ has a different prefix state than~$W_{\le i}$. Let $k$ be the number of 
prefix transitions in $W$, and let $i(1)<i(2)<\dotsb <i(k)$ be the positions where the transitions 
occur. We call the sequence of prefix states $p(0)<p(1)<\dotsb<p(k)$ such that the transition at 
position $i(j)$ is from $p(j-1)$ to $p(j)$ the \emph{prefix transition path} of~$W$. The words in 
$\C$ determine only finitely many possible prefix transition paths. The word $W$ can 
then be written as 
\begin{equation}
 W=B_0 a_{i(1)} B_1 a_{i(2)} B_2\dotsb B_{k-1} a_{i(k)} B_k,
\label{eq-decomp}\end{equation}
where $B_j$ is a (possibly empty) word over the loop alphabet $\L_{p(j)}$. We call $B_j$ the 
\emph{$j$-th loop block} of $W$, and we call the right-hand side of~\eqref{eq-decomp} the 
\emph{prefix decomposition} of~$W$. 

Conversely, suppose that $p(0)<p(1)<\dotsb<p(k)$ is an increasing sequence of valid prefix states 
where $p(0)$ is the initial state, that $t_j\in\A$ is a transition symbol from $p(j-1)$ to $p(j)$, 
and that $B_j$ is a possibly empty word over the alphabet $\L_{p(j)}$. Then the expression 
\[
 B_0 t_1 B_1 t_2 B_2\dotsb B_{k-1} t_k B_k
\]
is the prefix decomposition of a word from~$\C$ whose prefix transition path is  $p(0)<p(1)<\dotsb<p(k)$.

For a prefix state $p\in\P$, the \emph{growth rate of $p$}, denoted $\gamma_p$, is the growth 
rate of the language~$\L_p^\ast$. The \emph{dominant growth rate} of $\C$ is the value 
$\gamma=\max_{p\in\P} \gamma_p$. We say that a prefix state $p$ is \emph{dominant} if 
$\gamma_p=\gamma$, and we call a loop block in a prefix decomposition \emph{dominant} if it 
corresponds to a dominant state. Let $D$ denote the largest number of dominant states that can 
appear on a single prefix transition path of a word from~$\C$. 

%Let $\C_n$ be the set of words of $\C$ of weight~$n$. 
%We will show how $\gamma$ and $D$ affect 
%the asymptotics of $|\C_n|$.

\begin{proposition}\label{pro-growth}  Let $T=(p(0)<p(1)<\dotsb<p(k))$ be a prefix transition path, 
and let $\tau=(t_1,\dotsc,t_k)\in\A^k$ be a sequence of symbols where $t_j$ is a transition symbol 
from $p(j-1)$ to~$p(j)$. Let $\C_n(T,\tau)$ be the set of words in $\C_n$ that have prefix 
transition path $T$, with $j$-th transition on the symbol~$t_j$. Let $\gamma_T$ be the 
maximum of $\gamma_{p(i)}$ for $i\in\{0,\dotsc,k\}$, and let $d_T$ be the number of values 
$i\in\{0,\dotsc,k\}$ for which $\gamma_{p(i)}=\gamma_T$.

Then 
$
|\C_n(T,\tau)|=\Theta\left(n^{d_T-1}\gamma_T^n\right).
$
Consequently,
$
 |\C_n|= \Theta\left(n^{D-1}\gamma^n\right).
$
\end{proposition}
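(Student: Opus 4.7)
The plan is to realise $|\C_n(T,\tau)|$ as the $n$-th coefficient of an explicit rational generating function and to read off its asymptotics using the pole description from Lemma~\ref{lem-grow}. Every word in $\C(T,\tau)=\bigcup_n \C_n(T,\tau)$ has a unique prefix decomposition $B_0 t_1 B_1 t_2 B_2 \dotsb t_k B_k$ with $B_j\in\L_{p(j)}^\ast$, and conversely any such choice of loop blocks yields a word in $\C(T,\tau)$. Writing $s=\sum_j \weight(t_j)$, I therefore get
\[
\sum_{n\ge 0}|\C_n(T,\tau)|\,x^n \;=\; x^s\prod_{j=0}^k L^\ast_{p(j)}(x), \qquad L^\ast_p(x)=\frac{1}{1-L_p(x)},
\]
where $L_p(x)$ is the generating polynomial of~$\L_p$.

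Each loop alphabet $\L_{p(j)}$ is a down-set in $\A$, so if nonempty it contains the unique sum-indecomposable permutation of weight $1$, and Lemma~\ref{lem-grow} applies. Hence each nontrivial factor $L^\ast_{p(j)}(x)$ is rational with a simple pole of smallest modulus at $x=1/\gamma_{p(j)}$ and all further poles strictly outside this disk; trivial factors (when $\L_{p(j)}=\emptyset$) equal $1$ and can never be dominant. Consequently the full product is rational, has its pole of smallest modulus at $x=1/\gamma_T$ with multiplicity exactly $d_T$, and all its other poles lie at radius at least $1/\gamma_T+\varepsilon$ for some $\varepsilon>0$. A routine partial fraction decomposition, followed by coefficient extraction, then yields
\[
|\C_n(T,\tau)| \;=\; c_{T,\tau}\, n^{d_T-1}\gamma_T^n \;+\; O\!\bigl(n^{d_T-1}(\gamma_T-\varepsilon')^n\bigr)
\]
for some $c_{T,\tau}$ and $\varepsilon'>0$. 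Since each dominant factor $L^\ast_{p(j)}$ has strictly positive residue at $1/\gamma_T$ and each non-dominant factor is strictly positive there (both consequences of Lemma~\ref{lem-grow} combined with the nonnegativity of the polynomials $L_p$), the constant $c_{T,\tau}$ is strictly positive, and the first asymptotic follows.

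For the global count, I would partition $\C_n$ according to the pair $(T,\tau)$. The set of valid prefix states $\P$ is finite, so there are only finitely many valid prefix transition paths, each admitting finitely many compatible sequences of transition symbols. By the definition of $D$, at least one such pair achieves $d_T=D$ (and thus $\gamma_T=\gamma$). Summing the first asymptotic across these finitely many pairs, the dominant terms aggregate into $\Theta(n^{D-1}\gamma^n)$ and subdominant terms are absorbed into the error. The one delicate point, which I expect to be the main thing to verify carefully, is the strict positivity of $c_{T,\tau}$: it is precisely this positivity that upgrades the estimate from $O$ to $\Theta$, and it rests on propagating nonnegativity of residues through the product of rational functions.
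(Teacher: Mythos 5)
Your proposal is correct, and it rests on the same decomposition as the paper (the bijection between $\C(T,\tau)$ and $(k+1)$-tuples $(B_0,\dots,B_k)$ with $B_j\in\L_{p(j)}^\ast$, together with Lemma~\ref{lem-grow} for each loop-block language), but it handles the final asymptotic estimate differently. The paper never writes the generating-function identity $\sum_n|\C_n(T,\tau)|x^n=x^s\prod_j L^\ast_{p(j)}(x)$; instead it performs an elementary double sum, first over the split $n_I+n_J=n'$ of total weight between dominant and non-dominant blocks, then over compositions of $n_I$ and $n_J$, bounding the non-dominant contribution by $\cO(\delta^{n_J})$ for some $\delta<\gamma_T$ and observing that the $n_J=0$ term already attains the claimed order. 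Your route packages exactly the same information into one rational function and reads off the answer via the pole at $x=1/\gamma_T$ of order $d_T$. Both are sound; the paper's convolution-style bookkeeping has the advantage that its by-products are immediately re-usable in Lemma~\ref{lem-nondom} (bounding the total weight of non-dominant blocks), which a pure partial-fraction computation would not hand you for free. One small wording issue: the residue of $L^\ast_{p(j)}(x)=1/(1-L_{p(j)}(x))$ at its pole $\rho_j$ is $-1/L'_{p(j)}(\rho_j)<0$; what is positive is the coefficient in the expansion in powers of $(1-x/\rho_j)$. A cleaner way to see that $c_{T,\tau}>0$ is simply that $(1-x/\rho)^{d_T}G(x)$ is positive on $(0,\rho)$ (being a power series with nonnegative coefficients times a positive factor) and has a nonzero limit at $\rho^-$ because the non-dominant factors and $x^s$ are analytic and strictly positive there; this sidesteps sign-chasing through residues entirely.
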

\begin{proof}
Let $n' = n-\sum_{i=1}^k\weight(t_i)$. Let $\gamma_j$ be the growth rate of~$p(j)$. To count the 
words $W\in\C_n(T,\tau)$, we will count their corresponding prefix decompositions 
$W=B_0t_1 B_1\dotsb B_{k-1}t_k B_k$, or equivalently, the $(k+1)$-tuples $(B_0,B_1,\dotsc,B_k)$ 
with $\sum_{j=0}^k\weight(B_j)=n'$, where $B_j$ is a word over the alphabet $\L_{p(j)}$.

% Consider first the case $d_T=0$. To count the words $W\in\C_n(T,\tau)$, we will count
% their corresponding prefix decompositions 
% \[
%  W=B_0t_1 B_1\dotsb B_{k-1}t_k B_k,
% \]
% or equivalently, the $(k+1)$-tuples $(B_0,B_1,\dotsc,B_k)$ with
% $\sum_{j=0}^k\weight(B_j)=n'$, where $B_j$ is a word over the alphabet $\L_{p(j)}$. We 
% first fix a $(k+1)$-tuple of non-negative integers $(n_0,\dotsc,n_k)$ such that $\sum_{j=0}^k 
% n_i=n'$, and then choose each $B_j$ to be a word of weight $n_j$ over the alphabet 
% $\L_{p(j)}$. There are $\Theta(n^{k-1})$ choices for $(n_0,\dotsc,n_k)$, and by 
% Lemma~\ref{lem-grow}, there are $\Theta(\gamma_j^{n_j})$ choices for each $B_j$, 
% which overall gives 
% \[
%  |\C_n(T,\tau)|=\sum_{(n_0,\dotsc,n_k)}\prod_{j=0}^k 
% \Theta(\gamma_j^{n_j})=\cO(\delta^n),
% \]
% where the summation is over all the choices of $(n_0,\dotsc,n_k)$ satisfying 
% $n_0+\dotsb+n_k=n'$. 
                                                                                                 
Define the sets of indices  $I=\{i\in\{0,\dotsc,k\};\; \gamma_i=\gamma_T\}$ and $J=\{0,\dotsc, 
k\}\setminus I$. In particular, $|I|=d_T$. To estimate the number of prefix decompositions of 
elements of $\C_n(T,\tau)$, we will first fix two integers $n_I$ and $n_J$ with $n_I+n_J=n'$, and 
then count the decompositions in which the loop blocks with growth rate $\gamma_T$ have total weight 
$n_I$, and the remaining loop blocks have total weight~$n_J$. 

To count the possible choices for $(B_i;\; i\in I)$ with $\sum_{i\in I} 
\weight(B_i)=n_I$, we first choose a $d_T$-tuple $(n_i;\; i\in I)$ satisfying $\sum_{i\in I} 
n_i=n_I$, and then choose $B_i\in \L^\ast_{p(i)}$ of weight~$n_i$. The number of the suitable 
$d_T$-tuples $(n_i;\; i\in I)$ is $\Theta(n_I^{d_T-1})$, and for any $i\in I$, there are 
$\Theta(\gamma_T^{n_i})$ choices for~$B_i$ by Lemma~\ref{lem-grow}. Overall, the number of possible 
choices of $(B_i;\; i\in I)$ for a fixed $n_I$ is $\Theta(n_I^{d_T-1}\gamma_T^{n_I})$.

To count the choices of $(B_j;\; j\in J)$ satisfying $\sum_{j\in J} \weight(B_j)=n_J$, let 
$\delta>0$ be a value smaller than $\gamma_T$ but larger than $\gamma_j$ for any~$j\in J$. We then 
have $\Theta(n_J^{|J|-1})$ ways to choose a $|J|$-tuple $(n_j;j\in J)$ with $\sum_{j\in J} 
n_j=n_J$, and for each $j\in J$, $\Theta(\gamma_j^{n_j})$ ways to choose a block $B_j\in 
\L_{p(j)}^\ast$ of weight~$n_j$. The number of choices for $(B_j;\; j\in J)$ is thus 
$\cO(\delta^{n_J})$.

This yields
\[
 |\C_n(T,\tau)|= \sum_{(n_I,n_J)} \cO(\delta^{n_J})\Theta\left( n_I^{d_T-1}\gamma_T^{n_I}\right),
\]
where the summation is over all pairs $(n_I,n_J)$ satisfying $n_I+n_J=n'$. We see that the summand 
corresponding to $n_J=0$ in this sum already has order $\Theta(n^{d_T-1}\gamma^n)$, so we only need 
an upper bound for $|\C_n(T,\tau)|$ of the same order. Such an upper bound can be 
obtained as follows:
\begin{align*}
|\C_n(T,\tau)|&= \sum_{(n_I,n_J)} \cO(\delta^{n_J})\Theta\left( n_I^{d_T-1}\gamma_T^{n_I}\right)\\
&\le \Theta\left(n^{d_T-1}\gamma_T^n\right) \sum_{(n_I,n_J)} \frac{\delta^{n_J}}{\gamma_T^{n_J}}\\
&\le\Theta\left(n^{d_T-1}\gamma_T^n\right) 
\sum_{n_J=0}^{\infty}\left(\frac{\delta}{\gamma_T}\right)^{n_J}\\
&\le\Theta\left(n^{d_T-1}\gamma_T^n\right).
\end{align*}
The bound for $|\C_n|$ then follows by summing $|\C_n(T,\tau)|$ over all possible (finitely 
many) choices of $T$ and $\tau$, noting that these choices are independent of~$n$.
\end{proof}

\begin{lemma}\label{lem-nondom}
Let $W$ be a uniformly random word from the set $\C_n$. With probability $1-2^{-\Omega(\sqrt n)}$, 
the total weight of the non-dominant loop blocks in the prefix decomposition of $W$ is smaller than 
$\sqrt n$. 
\end{lemma}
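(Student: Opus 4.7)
My plan is to condition on the prefix transition path $T=(p(0)<\dotsb<p(k))$ and the transition symbols $\tau=(t_1,\dotsc,t_k)$ of a uniformly random $W\in\C_n$. Since $(T,\tau)$ ranges over a finite set independent of $n$, it suffices to bound the conditional probability for each $(T,\tau)$ separately and then apply a union bound. This closely parallels the counting in Proposition~\ref{pro-growth}, but carried out with the additional constraint that the non-dominant loop blocks have total weight at least $\sqrt n$.

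Fix $(T,\tau)$ and let $I=\{j:\gamma_{p(j)}=\gamma\}$ be the set of dominant indices along $T$, with $J=\{0,\dotsc,k\}\setminus I$. If $I=\emptyset$ then $\gamma_T<\gamma$, so by Proposition~\ref{pro-growth} $|\C_n(T,\tau)|$ is already $2^{-\Omega(n)}|\C_n|$, which is much stronger than the claimed bound. So assume $I\neq\emptyset$, whence $|\C_n(T,\tau)|=\Theta(n^{|I|-1}\gamma^n)$. Set $\delta=\max_{j\in J}\gamma_{p(j)}$, which is strictly less than $\gamma$ by the definition of $J$ (interpreting $\gamma_{p(j)}=0$ for any $j$ with $\L_{p(j)}=\emptyset$, in which case the corresponding block is forced to be empty and contributes nothing).

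The key step is to count the words in $\C_n(T,\tau)$ for which the non-dominant loop blocks have total weight exactly $m$. Splitting the counting of the prefix decomposition between the two groups of blocks, as in Proposition~\ref{pro-growth}, Lemma~\ref{lem-grow} gives $\cO(m^{|J|-1}\delta^m)$ choices for $(B_j)_{j\in J}$ of total weight $m$ (polynomially many weight splittings across $J$, each yielding at most $\delta^m$ words), and $\cO(n^{|I|-1}\gamma^{n-m})$ choices for $(B_i)_{i\in I}$. Multiplying, dividing by $|\C_n(T,\tau)|=\Theta(n^{|I|-1}\gamma^n)$, and summing over $m\ge\sqrt n$ then yields
\[
 \bbP\bigl(\text{non-dominant weight of }W\ge\sqrt n\,\bigm|\,T,\tau\bigr)=\cO\Bigl(\sum_{m\ge\sqrt n} m^{|J|-1}(\delta/\gamma)^m\Bigr).
\]

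The main obstacle, and really the only delicate point, is showing this tail is $2^{-\Omega(\sqrt n)}$. Since $\delta/\gamma<1$ we have $m^{|J|-1}(\delta/\gamma)^m=\exp(-\Omega(m)+\cO(\log m))$, so each summand with $m\ge\sqrt n$ is $\exp(-\Omega(\sqrt n))$, and the geometric tail inherits the same rate. A union bound over the finitely many $(T,\tau)$ completes the argument. The subtlety worth flagging is the need to keep the polynomial prefactors in $n$ (arising from the dominant block count) separate from those in $m$, so that the exponential suppression, which happens only on the $m$-scale, is not swamped by the polynomial in $n$.
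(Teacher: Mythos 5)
Your proof is correct and follows essentially the same route as the paper's: both fix the prefix transition path $T$ and transition symbols $\tau$, split loop blocks into dominant and non-dominant indices, apply Lemma~\ref{lem-grow} and the counting scheme of Proposition~\ref{pro-growth}, and bound the tail $\sum_{m\ge\sqrt n}$ by a geometric decay in $\sqrt n$. The only cosmetic difference is that the paper absorbs the polynomial factor $m^{|J|-1}$ by choosing $\delta$ strictly between $\max_{j\in J}\gamma_{p(j)}$ and $\gamma$, whereas you keep the factor explicit and note it is dominated by $(\delta/\gamma)^m$; these are equivalent.
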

\begin{proof} Proposition~\ref{pro-growth} shows that the fraction of words from $\C_n$ 
whose prefix transition path $T$ has no dominant state is exponentially small, so let us focus 
on words with at least one dominant loop state. Applying the argument and the notation of the proof 
of Proposition~\ref{pro-growth} to such $T$, we have $\gamma_T=\gamma$, $d_T\le D$, and the 
number of words in $\C_n(T,\tau)$ whose non-dominant loop blocks have total weight at least $\sqrt 
n$ is at most
\begin{align*}
\sum_{n_J=\sqrt n}^{n}\! \cO\left(\delta^{n_J} n^{D-1}\gamma^{n-n_J}\right)&\le 
\cO\left(n^{D-1}\gamma^n\right) 
\sum_{n_J=\sqrt n}^{\infty}\left(\frac{\delta}{\gamma}\right)^{n_J}
\le \cO\left(n^{D-1}\gamma^n \left(\frac{\delta}{\gamma}\right)^{\!\sqrt n}\right).
\end{align*}
The lemma follows.
\end{proof}

Recall that $F=\{\phi^1,\dotsc,\phi^f\}$ is the set of minimal elements of $\A^\ast$ not belonging 
to~$\C$, and that $\ell_j$ is the length of $\phi^j$. To proceed with our argument, we now need to 
also start considering suffix states of a word, which are analogous to prefix states. The 
\emph{suffix state} of a word $W\in\C$ is the $f$-tuple $(s_1,\dotsc,s_f)$ where $s_j$ is the length 
of the longest suffix of $\phi^j$ contained in~$W$. We say that a suffix state $s=(s_1,\dotsc,s_f)$ 
is \emph{valid} if $s_j<\ell_j$ for each $j$, and we let $\S$ be the set of all the possible suffix 
states of the words from~$\C$ (which are necessarily valid).

In analogy with prefix states, we can associate to a suffix state $s\in\S$ a \emph{suffix loop 
alphabet} $\coL_s$, which is the set of all the symbols $a\in\A$ such that if $W$ has suffix state 
$s$ then $aW$ has suffix state $s$ as well. We also say that a symbol $b$ is a suffix transition 
symbol from $s$ to $s'$, if for a word $W$ of suffix state $s$, the word $bW$ has suffix state~$s'$.

We can also define suffix decompositions, analogous to their prefix counterparts, but obtained when 
scanning a word from right to left. We say that a word $W=a_1a_2\dotsb a_m\in\C$ has a \emph{suffix 
transition} at position $i$ if $W_{\ge i}$ has a different suffix state than $W_{>i}$. Let 
$i(1)>i(2)>\dotsb>i(k)$ be all the positions where $W$ has a suffix transition, ordered right to 
left, and let $s(0)<s(1)<\dotsb<s(k)$ be the corresponding suffix transition path; that is, 
$W_{>i(j)}$ has suffix state $s(j-1)$ and $W_{\ge i(j)}$ has suffix state $s(j)$. The 
\emph{suffix decomposition} of $W$ then takes the form $W= B_ka_{i(k)}B_{k-1}\dotsb 
B_1 a_{i(1)} B_0$, where $B_j$ is a word over the suffix loop alphabet of $s(j)$.

Suffix decompositions satisfy analogous properties as their prefix counterparts. For instance, 
Proposition~\ref{pro-growth} and Lemma~\ref{lem-nondom} remain true when restated in the setting of 
suffix decompositions. This implies, in particular, that the largest growth rate of a suffix state 
is the same as the largest growth rate $\gamma$ of a prefix state, and the maximum number of 
dominant blocks in a suffix decomposition is the same as the maximum number $D$ of dominant blocks 
in a prefix decomposition.

Let $p=(p_1,\dotsc,p_f)$ be a prefix state of a word $X$, and let $s=(s_1,\dotsc,s_f)$ be a 
suffix state of a word~$Y$. Recall that $\ell_i$ is the length of the forbidden word $\phi^i\in
F$. We say that the two states $p$ and $s$ \emph{overlap}, if for some $i\in[f]$, we have 
$p_i+s_i\ge \ell_i$. The inequality $p_i+s_i\ge \ell_i$ holds if and only if $XY$ 
contains~$\phi^i$. In particular, $p$ and $s$ overlap if and only if $XY$ is not in~$\C$. 
An invalid prefix state overlaps with any suffix state and vice versa. If $p$ and $s$ do not 
overlap, we say that they are \emph{compatible}.

\begin{lemma}\label{lem-xydom}
Let $X$ be a word with $d_X$ dominant loop blocks in its prefix decomposition, and let $Y$ be a word 
with $d_Y$ dominant loop blocks in its suffix decomposition. If $XY$ is in $\C$, then $d_X+d_Y\le 
D+1$. 
\end{lemma}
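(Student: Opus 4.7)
The plan is to prove the inequality by a counting comparison: I will lower-bound the number of concatenations $VW$ that mimic $X$'s prefix structure and $Y$'s suffix structure, and upper-bound this count by $|\C_{n_1+n_2}|$ via Proposition~\ref{pro-growth} (in both its prefix and its suffix form).

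Set $q$ to be the prefix state of $X$ (the final state of $X$'s prefix transition path) and $s$ the suffix state of $Y$. Since $XY\in\C$, splitting $XY$ at the $X$--$Y$ boundary shows that $q$ and $s$ are compatible, i.e., $q_i+s_i<\ell_i$ for every $i$. The crucial observation is that compatibility alone suffices for gluing: if $V$ is any word sharing $X$'s prefix transition path and transition symbols, and $W$ is any word sharing $Y$'s suffix transition path and transition symbols, then $V$ ends at prefix state $q$ and $W$ begins at suffix state $s$, so $VW\in\C$. Indeed, neither $V$ nor $W$ contains any $\phi^k$ individually, and a $\phi^k$ straddling the split would require $j\le q_k$ letters of $\phi^k$ in $V$ and $\ell_k-j\le s_k$ letters in $W$, giving $q_k+s_k\ge\ell_k$ and contradicting compatibility. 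With the split point fixed at $n_1=|V|$, distinct pairs $(V,W)$ produce distinct concatenations $VW$.

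Assuming $d_X,d_Y\ge 1$ (if either equals $0$ then $d_X+d_Y\le D$ already), I then invoke Proposition~\ref{pro-growth} for $X$'s prefix transition path $(T_X,\tau_X)$: the path contains at least one dominant state, so $\gamma_{T_X}=\gamma$ and $|\C_{n_1}(T_X,\tau_X)|=\Theta(n_1^{d_X-1}\gamma^{n_1})$. The suffix analogue of the proposition, asserted in the excerpt, gives an analogous $\Theta(n_2^{d_Y-1}\gamma^{n_2})$ for words sharing $Y$'s suffix transition path $(T'_Y,\tau'_Y)$. Combining with the injection into $\C_{n_1+n_2}$ yields
\[
\Theta(n_1^{d_X-1}\gamma^{n_1})\cdot\Theta(n_2^{d_Y-1}\gamma^{n_2}) \le |\C_{n_1+n_2}| = \Theta\bigl((n_1+n_2)^{D-1}\gamma^{n_1+n_2}\bigr).
\]
Setting $n_1=n_2=m$ and letting $m\to\infty$ forces $d_X+d_Y-2\le D-1$, i.e., $d_X+d_Y\le D+1$.

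The argument is short once one spots the right observation; its only substantive ingredient is that compatibility of $q$ and $s$ alone already guarantees $VW\in\C$, which is exactly what lets the product of two asymptotic enumerations sit inside $|\C_{n_1+n_2}|$. The potential pitfall I first ran into was trying to track how the dominant blocks of $X$'s prefix decomposition interleave positionally with those of $Y$'s suffix decomposition inside $XY$; the counting approach sidesteps this entirely.
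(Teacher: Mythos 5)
Your proposal is correct and follows essentially the same route as the paper's proof: both arguments fix the prefix transition path of $X$ and the suffix transition path of $Y$, use Proposition~\ref{pro-growth} (and its suffix analogue) to get $\Theta(n^{d_X-1}\gamma^n)$ and $\Theta(n^{d_Y-1}\gamma^n)$ counts, observe that compatibility of the terminal prefix state and initial suffix state (guaranteed by $XY\in\C$) lets any such pair be glued into a word of $\C_{2n}$, and compare with $|\C_{2n}|=\Theta(n^{D-1}\gamma^{2n})$. The only difference is that you spell out explicitly why compatibility alone suffices for the gluing, a step the paper leaves implicit.
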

\begin{proof}
If $d_X=0$ or $d_Y=0$, the claim follows trivially, so assume that $d_X$ and $d_Y$ are both 
positive.

Let $T_X$ be the prefix transition path of $X$ and $T_Y$ the suffix transition path of~$Y$. Let 
$\C_n(T_X)$ be the subset of $\C_n$ of those words that have prefix transition path $T_X$, and 
$\C^\suf_n(T_Y)$ be the subset of $\C_n$ of words whose suffix transition path is~$T_Y$. In 
particular, any word in $\C_n(T_X)$ has prefix state $p$, while words in $\C^\suf_n(T_Y)$ have 
suffix state~$s$. This means that a concatenation of a word from $\C_n(T_X)$ with a word from 
$\C^\suf_n(T_Y)$ yields a word from $\C_{2n}$. With the help of Proposition~\ref{pro-growth}, we get
\[
 |\C_{2n}|\ge 
|\C_n(T_X)|\cdot|\C^\suf_n(T_Y)|=\Theta(n^{d_X-1}\gamma^n)\cdot\Theta(n^{d_Y-1}\gamma^n)=\Theta(n^{
d_X+d_Y-2}\gamma^{2n}).
\]
On the other hand, we know that $|\C_{2n}|=\Theta(n^{D-1}\gamma^{2n})$. The lemma follows.
\end{proof}

Let us say that a prefix state $p'\in\P$ is a \emph{prefix successor} of $p\in\P$ if $p'\neq p$ 
and there is a symbol $b\in\A$ which is a transition symbol from $p$ to $p'$. Suffix successors are 
defined analogously.

For a prefix state $p\in \P$ and a suffix state $s\in\S$, we say that $p$ and $s$ \emph{match}, if 
they are compatible, but every prefix successor of $p$ overlaps with $s$, and every suffix 
successor of $s$ overlaps with~$p$. 

As an example, consider $\A=\{a,b,c,d\}$ and $F=\{\phi^1,\phi^2\}$ with $\phi^1=abc$ and 
$\phi^2=dbdbc$. Let $X=da$ and $Y=c$. Then $X$ has prefix state $p=(1,1)$ and $Y$ has 
suffix state $s=(1,1)$. The two states match: the only prefix successor of $p$ is the state $(2,2)$ 
which overlaps $s$, and the only suffix successor of $s$, namely $(2,2)$, overlaps~$p$. Consider 
now the word $X'=dba$: its prefix state $p'=(1,2)$ is also compatible with $s$, but it does not 
match with $s$, since it has the successor $(1,3)$ compatible with~$s$. Notice that, perhaps 
non-intuitively, although $p$ and $s$ match and $p<p'$ (in fact $X< X'$), the two compatible states 
$p'$ and $s$ do not match.

\begin{lemma}\label{lem-match1}
If $p\in\P$ and $s\in\S$ match, then $\L_p=\coL_s$. 
\end{lemma}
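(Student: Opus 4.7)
The plan is to prove the two containments $\L_p\subseteq\coL_s$ and $\coL_s\subseteq\L_p$ separately. The matching condition is symmetric in the prefix/suffix roles (every prefix successor of $p$ overlaps $s$, and every suffix successor of $s$ overlaps $p$), so the two inclusions admit mirror-image proofs, and I will describe only the first one.

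For $\L_p\subseteq\coL_s$, I would fix $a\in\L_p$ and, using $p\in\P$ and $s\in\S$, pick witnesses $X,Y\in\C$ whose prefix and suffix states are $p$ and $s$ respectively. Compatibility of $p$ and $s$ gives $XY\in\C$. Because $a$ is a loop symbol for $p$, the word $Xa$ still has prefix state $p$, and compatibility applied once more yields $XaY\in\C$. In particular $aY$ lies in $\C$ and so has some suffix state $s'\in\S$; moreover $s'\geq s$ coordinatewise, since every suffix of any $\phi^i$ that embeds in $Y$ also embeds in the longer word $aY$.

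The heart of the argument is then ruling out $s'>s$. In that case $a$ witnesses $s'$ as a suffix successor of $s$, and the matching hypothesis forces $s'$ to overlap $p$. But $X$ has prefix state $p$ and $aY$ has suffix state $s'$, so the already-established fact $XaY\in\C$ contradicts this overlap. Hence $s'=s$, i.e.\ $a\in\coL_s$, and the reverse inclusion follows by the same argument with the roles of prefix and suffix exchanged. The only subtlety worth noting is that suffix states really do depend only on the previous state and the prepended symbol (so that ``$a$ witnesses a suffix successor of $s$'' is a meaningful notion); this is immediate from the embedding-order definition because the prefixes of any $\phi^i$ are totally ordered under $\leq$, and dually for suffixes. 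Beyond this book-keeping point I do not foresee a real obstacle.
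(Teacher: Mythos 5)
Your argument is correct and is essentially the same as the paper's: fix a letter $a\in\L_p$, take witnesses $X$ and $Y$, use the loop property and compatibility to place $XaY$ in $\C$, then observe that if $a\notin\coL_s$ the suffix state $s'$ of $aY$ would be a suffix successor of $s$, which by matching overlaps $p$, contradicting $X(aY)\in\C$. The paper phrases it as a single contradiction starting from $\L_p\neq\coL_s$, but the content is identical; your closing remark on transitions being well-defined is a point the paper states earlier without proof, and your justification for it is sound.
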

\begin{proof}
Suppose for contradiction that $\L_p\neq\coL_s$. Assume, without loss of generality, that there is 
a symbol $b\in\L_p\setminus\coL_s$. Consider a word $X$ with prefix state $p$ and a word $Y$ with 
suffix state~$s$. Since $p$ and $s$ are compatible, $XY$ is in~$\C$. Note that since $b$ is in 
$\L_p$, $Xb$ has prefix state~$p$. Let $s'$ be the suffix state of~$bY$. Then $s'$ is a suffix 
successor of $s$, and therefore it overlaps with~$p$. Then the word $W=XbY$ is in $\C$, since the 
prefix state of $Xb$ is compatible with the suffix state of $Y$; on the other hand, $W$ is not in 
$\C$, since the suffix state of $bY$ overlaps the prefix state of $X$. This is a contradiction.
\end{proof}

\begin{lemma}\label{lem-match2}
Let $p\in\P$ and $s\in\S$ be a pair of compatible states with $\L_p=\coL_s$. Then the following 
are equivalent:
\begin{enumerate}
\item[(I)] The states $p$ and $s$ do not match.
\item[(II)] The state $p$ has a prefix successor compatible with $s$.
\item[(III)] The state $s$ has a suffix successor compatible with $p$.
\item[(IV)] There is a symbol $b\in\A\setminus\L_p$ such that for any word $X$ with prefix state $p$ 
and any word $Y$ with suffix state $s$, the word $XbY$ is in~$\C$.
\end{enumerate}
\end{lemma}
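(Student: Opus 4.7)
The plan is to show (II), (III), and (IV) are pairwise equivalent, and then observe that (I) reduces to (II)$\,\vee\,$(III) by the definition of matching; since those two turn out to be equivalent by the pairwise equivalence, all four conditions coincide.

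First I would handle the easy implications (II)$\,\Rightarrow\,$(IV) and (III)$\,\Rightarrow\,$(IV). If $p$ has a prefix successor $p'$ compatible with $s$, pick a transition symbol $b$ from $p$ to $p'$; since $p'\neq p$, this $b$ lies in $\A\setminus\L_p$. For any $X$ of prefix state $p$ and any $Y$ of suffix state $s$, the word $Xb$ has prefix state $p'$, which is compatible with $s$, so $XbY\in\C$. The argument for (III)$\,\Rightarrow\,$(IV) is symmetric, using a suffix transition symbol and the fact that $\coL_s=\L_p$ guarantees it lies outside $\L_p$.

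Next I would do the (IV)$\,\Rightarrow\,$(II) and (IV)$\,\Rightarrow\,$(III) implications. Let $b\in\A\setminus\L_p$ be the symbol given by (IV). Since $b\notin\L_p$, appending $b$ to any word of prefix state $p$ produces a word of some strictly larger prefix state $p'$, so $p'$ is a prefix successor of $p$. Fixing any $Y$ of suffix state $s$ and any $X$ of prefix state $p$, the containment $XbY\in\C$ forces $p'$ and $s$ to be compatible (otherwise some $\phi^i$ would embed using its prefix in $X$ and its suffix in $Y$, giving $\phi^i\le XbY$). This yields (II). For (III), use the hypothesis $\L_p=\coL_s$ to conclude $b\notin\coL_s$, so $b$ is a suffix transition symbol from $s$ to some suffix successor $s'$; then $bY$ has suffix state $s'$, and an analogous use of $XbY\in\C$ forces $p$ and $s'$ to be compatible.

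Finally, (I)$\,\Leftrightarrow\,$(II)$\,\vee\,$(III) follows by unwinding the definition of matching: since $p$ and $s$ are compatible by assumption, failing to match precisely means either some prefix successor of $p$ is compatible with $s$ or some suffix successor of $s$ is compatible with $p$. Combined with the pairwise equivalence of (II), (III), (IV), this closes the loop. The one subtle step, and essentially the only place where the hypothesis $\L_p=\coL_s$ is used, is the (IV)$\,\Rightarrow\,$(III) direction, where I must translate ``$b$ is not a prefix loop symbol'' into ``$b$ is not a suffix loop symbol'' in order to extract a genuine suffix successor; I expect this to be the only point that requires any care.
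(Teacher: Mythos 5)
Your proof is correct and takes essentially the same route as the paper: establish that (II), (III), (IV) are pairwise equivalent by the transition-symbol argument (using $\L_p=\coL_s$ to pass between prefix and suffix transition symbols), and then observe that (I) is, by definition of matching, equivalent to the disjunction (II)$\,\vee\,$(III). The only quibble is that the hypothesis $\L_p=\coL_s$ is needed in both directions between (III) and (IV), not just in (IV)$\,\Rightarrow\,$(III), but you in fact use it correctly in both places.
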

\begin{proof}
If (IV) holds, then the prefix state of $Xb$ is a prefix successor of $p$ compatible with $s$, 
while the suffix state of $bY$ is a suffix successor of $s$ compatible with $p$, so (II) and (III) 
hold as well, and clearly both (II) and (III) implies (I). We also easily see that (II) implies 
(IV) and (III) implies (IV), and therefore (II), (III) and (IV) are equivalent. Finally, (I) 
implies that (II) or (III) holds, and therefore (I) implies (IV), completing the proof.
\end{proof}

\begin{lemma}\label{lem-tight}
Let $X$ be a word with prefix state $p$ and with $d_X$ dominant blocks in its prefix decomposition, 
and $Y$ a word with suffix state $s$ and $d_Y$ dominant blocks in its suffix decomposition. Suppose 
that $p$ and $s$ are compatible, that $d_X+d_Y=D+1$, that $\L_p=\coL_s$, and that the state $p$ (and 
therefore also $s$) is dominant. Then the two states $p$ and $s$ match.
\end{lemma}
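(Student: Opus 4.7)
The plan is a proof by contradiction. Suppose that $p$ and $s$ do not match. Then by Lemma~\ref{lem-match2}(IV), there is a symbol $b\in\A\setminus\L_p$ such that $X'bY'\in\C$ for every word $X'$ with prefix state $p$ and every word $Y'$ with suffix state~$s$. Let $p'$ denote the prefix state of $Xb$ and $s'$ the suffix state of $bY$; by Lemma~\ref{lem-match2}(II, III), $p'$ is compatible with $s$ and $p$ is compatible with~$s'$.

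First I would apply Lemma~\ref{lem-xydom} to the two splittings $(Xb, Y)$ and $(X, bY)$ of $XbY\in\C$. The prefix decomposition of $Xb$ merely extends that of $X$ by a transition to $p'$ followed by an empty loop block at $p'$, so $d_{Xb}=d_X+\bbI[\gamma_{p'}=\gamma]$. Combined with $d_{Xb}+d_Y\le D+1=d_X+d_Y$, this forces $p'$ to be non-dominant; the symmetric argument applied to $(X, bY)$ gives the same conclusion for~$s'$.

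The next step is to set up a counting contradiction. Let $T_X$ be the prefix transition path of $X$ and $T_Y$ the suffix transition path of $Y$, and consider the family
\[
 \mathcal F=\bigl\{X'bY':X'\in\C_n(T_X),\ Y'\in\C^\suf_n(T_Y)\bigr\}\subseteq\C_{2n+\weight(b)}.
\]
Distinct pairs $(X',Y')$ yield distinct words since the split by weight is unique, so by Proposition~\ref{pro-growth} we have $|\mathcal F|=\Theta(n^{d_X-1}\gamma^n)\cdot\Theta(n^{d_Y-1}\gamma^n)=\Theta(n^{D-1}\gamma^{2n})$. The same proposition shows that the number of words in $\C_{2n+\weight(b)}$ whose prefix decomposition has fewer than $D$ dominant blocks is only $\cO(n^{D-2}\gamma^{2n+\weight(b)})=\cO(n^{D-2}\gamma^{2n})$, a factor of $\Omega(n)$ smaller than $|\mathcal F|$; hence the desired contradiction will follow once one shows that every element of $\mathcal F$ has strictly fewer than $D$ dominant prefix blocks.

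Establishing this last claim is the principal obstacle. The prefix transition path of $W=X'bY'\in\mathcal F$ begins with $T_X$ (contributing $d_X$ dominant blocks), passes through the non-dominant $p'$ via $b$, and finishes with the states traversed while reading $Y'$ from initial prefix state $p'$; Lemma~\ref{lem-xydom} already bounds the final contribution by $d_Y-1$, and what is needed is the strictly sharper bound $d_Y-2$. The key idea is that the hypothesis $\L_p=\coL_s$ ensures that reading $Y'$ from the natural state $p$ makes its leftmost suffix-$s$ block loop at $p$ and merge with the $p$-block, contributing $d_Y$ dominant prefix blocks and yielding the tight count $D$ in the decomposition of $X'Y'$; the shift to the strictly larger, non-dominant $p'$, whose loop alphabet $\L_{p'}$ necessarily differs from $\L_p=\coL_s$, both destroys this merging and prevents the first suffix-transition symbol of $Y'$ from arriving at the expected dominant prefix successor, costing a second dominant block. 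Turning this heuristic into a rigorous comparison of the prefix paths obtained by reading $Y'$ from $p$ versus from~$p'$, likely via the componentwise monotonicity of prefix states under the embedding order, is the step I expect to require the most care.
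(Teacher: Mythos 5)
Your opening move is correct and carefully argued: from Lemma~\ref{lem-match2}(IV) you obtain $b\in\A\setminus\L_p$ with $XbY\in\C$, and applying Lemma~\ref{lem-xydom} to the splits $(Xb,Y)$ and $(X,bY)$ does show that $p'$ and $s'$ are non-dominant, since the tightness $d_X+d_Y=D+1$ leaves no room for the extra (possibly empty) loop block created at $p'$ or at $s'$. But this observation is not the lever the paper uses, and the counting argument you build on top of it has a real gap that you yourself flag.

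The gap is the claim that \emph{every} word $X'bY'$ with $X'\in\C_n(T_X)$, $Y'\in\C^\suf_n(T_Y)$ has strictly fewer than $D$ dominant prefix loop blocks. Nothing you have derived pins down how many dominant prefix states the scan of $Y'$ from state $p'$ visits. Lemma~\ref{lem-xydom} applied to $(X'b,Y')$ only controls the sum of \emph{prefix}-dominant blocks of $X'b$ and \emph{suffix}-dominant blocks of $Y'$; it says nothing about the prefix decomposition of the concatenation beyond the global bound $\le D$. It is entirely consistent with the hypotheses that the prefix path after $p'$ picks up $D-d_X$ dominant states inside $Y'$ and that $X'bY'$ has exactly $D$ dominant prefix blocks, in which case your family $\mathcal F$ is not contained in the small set you are counting against and the $\Theta(n^{D-1})$ versus $\cO(n^{D-2})$ comparison never fires. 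Componentwise monotonicity of prefix states under $p<p'$ pushes the states up, but a larger state is not necessarily a non-dominant one, so it does not yield the $d_Y-2$ bound you need. Your heuristic about ``destroying the merging'' and ``costing a second dominant block'' is exactly the piece that would have to be made precise, and I do not see how to do it along these lines.

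The paper avoids this problem with a cleaner trick that does not require controlling the prefix path through $Y$ at all. Using $\L_p=\coL_s$, it inserts between $Xb$ and $Y$ a long buffer word $W$ consisting of $|\P|$ copies of the concatenation of all symbols of $\L_p$. Since $W$ is over $\coL_s$, $WY$ has the same suffix state as $Y$, so $XbWY\in\C$. Because a word in $\C$ has at most $|\P|-1$ prefix transitions, one copy of the buffer lies inside a single prefix loop block, whose loop alphabet therefore contains $\L_p$ and is hence dominant; as this block sits entirely after the transition symbol $b$, the prefix decomposition of $XbW$ has at least $d_X+1$ dominant blocks. Applying Lemma~\ref{lem-xydom} to the split $(XbW,Y)$ then gives $(d_X+1)+d_Y\le D+1$, contradicting $d_X+d_Y=D+1$ directly, with no counting over $\C_n$ needed. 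If you want to keep your framing, the essential missing idea is precisely this insertion of a long $\L_p$-word to \emph{force} an extra dominant block, rather than trying to prove that the bare concatenation $X'bY'$ already has too few.
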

\begin{proof}
Suppose that $p$ and $s$ do not match. Then, by part (IV) of Lemma~\ref{lem-match2}, there is a 
symbol $b\in\A\setminus\L_p$ such that $XbY$ is in~$\C$. Let $U$ be the word obtained by 
concatenating all the symbols of $\L_p$ in any order, and let $W$ be the concatenation of $|\P|$ 
disjoint copies of~$U$. Consider the word $Z=XbWY$. Since $W$ is a word over the alphabet 
$\L_p$, which is equal to $\coL_s$, we know that $WY$ has the same suffix state as $Y$, namely~$s$. 
Since the prefix state of $Xb$ is compatible with $s$ (recall that $XbY\in\C$), we conclude that 
$XbWY$ is in~$\C$. 

We claim that the prefix decomposition of $XbW$ has more dominant blocks than the prefix 
decomposition of $X$, i.e., $XbW$ has at least $d_X+1$ dominant blocks. To see this, note that the 
prefix decomposition of a word in $\C$ has at most $|\P|-1$ transitions. This means that in 
$W$, there is a copy of $U$ which does not contain any prefix transition. This copy of $U$ is thus 
entirely contained in a single loop block $B$, whose corresponding loop alphabet therefore 
contains all the symbols of $\L_p$. Since $\L_p$ is dominant by assumption, the loop block $B$ is 
also dominant. Moreover, $B$ is entirely contained in $W$, since $b$ is a transition symbol by 
construction. Thus, $XbW$ has at least $d_X+1$ dominant blocks. This contradicts 
Lemma~\ref{lem-xydom}, applied to the decomposition of $Z$ into $XbW$ and~$Y$.
\end{proof}

We remark that the assumptions of Lemma~\ref{lem-tight} are actually redundant: the lemma remains 
true even without assuming that $\L_p=\coL_s$ and that $p$ and $s$ are dominant. In fact, these two 
assumptions are themselves consequences of $d_X+d_Y=D+1$. However, we will not need this stronger 
fact.

Fix now the constant $Q = 3(|\P|+|\S|)$. The choice of $Q$ guarantees that whenever a word 
$W\in\C$ is expressed as a concatenation of $Q$ subwords $W=W_1W_2\dotsb W_Q$, there 
will be three consecutive subwords $W_{i-1},W_i,W_{i+1}$ such that none of them contains a 
prefix transition or a suffix transition of~$W$; this is because each $W\in\C$ has at most 
$|\P|-1$ prefix transitions and at most $|\S|-1$ suffix transitions.

An \emph{equitable partition} of a word $W\in\C_n$ is the expression $W=W_1W_2\dotsb W_Q$, where 
the $W_j$ are chosen in such a way that for every $j\in[Q]$, the prefix $W_1W_2\dotsb W_j$ is the 
shortest prefix of $W$ whose weight is at least $jn/Q$. We call $W_j$ the \emph{$j$-th slice} 
of the equitable partition. Recall that $K$ is the largest weight of a symbol in $\A$, and note that 
the above definition guarantees that $jn/Q\le\weight(W_1\dotsb W_j)< jn/Q+K$ for every $j\in[Q]$. 
In particular, each $W_j$ satisfies $n/Q-K<\weight(W_j)<n/Q+K$.

A slice $W_j$ in the equitable partition of a word $W\in\C_n$ is \emph{free} if it does 
not contain any prefix transition or suffix transition of~$W$. This means that in the 
prefix decomposition of $W$, as well as in the suffix decomposition of $W$, the free slice belongs 
to a single loop block. Our choice of $Q$ guarantees that the equitable partition of any word 
$W\in\C$ contains three consecutive free slices. Let $\C_n(j)$ be the subset of $\C_n$ containing 
the words whose $j$-th slice is free. 

Recall that $D$ is the largest number of dominant loop blocks in a prefix decomposition of a word 
from $W$, which is also equal to the largest number of dominant loop blocks in a suffix 
decomposition of a word in~$W$. We say that a word $W\in\C_n$ is \emph{typical}, if for every free 
slice $W_j$ in the equitable partition of~$W$, the following conditions hold:
\begin{enumerate}
\item $W_j$ is contained in a dominant loop block both in the prefix decomposition of $W$ and in 
the suffix decomposition of $W$.
\item Let $X_j=W_1W_2\dotsb W_{j-1}$ and $Y_j=W_{j+1}W_{j+2}\dotsb W_Q$. Let $d_X$ be the number 
of dominant loop blocks in the prefix decomposition of $X_j$, and let $d_Y$ be the number of 
dominant loop blocks in the suffix decomposition of~$Y_j$. Then $d_X+d_Y=D+1$.
\item With $X_j$ and $Y_j$ as above, let $p_j$ be the prefix state of $X_j$ and let $s_j$ be the 
suffix state of $Y_j$. Then $p_j$ and $s_j$ match.
\item Let $\L_j$ be the prefix loop alphabet of the state $p_j$. 
%(if $p_j$ and $s_j$ match, $\L_j$ is also equal to the suffix loop alphabet of $s_j$ by Lemma~\ref{lem-match1}). 
Then for any word 
$P$ of length $\max\{4,|\L_j|\}$ over the alphabet $\L_j$, $W_j$ contains at least $\sqrt n$ 
disjoint $P$-blocks.
\end{enumerate}

\begin{lemma}\label{lem-typ2}
A uniformly random word $W\in\C_n$ is typical with probability $1-\cO(1/n)$. 
\end{lemma}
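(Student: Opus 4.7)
My plan is to argue that each of the four conditions in the definition of typicality fails, for any given free slice $W_j$, with probability $\cO(1/n)$. Since there are at most $Q$ slices (a constant) and the remaining auxiliary discrete data (transition paths, transition symbols, and choices of $\L_j$ and $P$) range over a finite set, a union bound then yields the claimed $1-\cO(1/n)$ bound.

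For condition~1, note that a free slice has weight at least $n/Q-K$ and lies entirely inside a single loop block of both the prefix and the suffix decomposition, because it contains no transitions. By Lemma~\ref{lem-nondom} applied to each decomposition, the total weight of all non-dominant loop blocks is below $\sqrt n$ with probability $1-2^{-\Omega(\sqrt n)}$; since $n/Q-K>\sqrt n$ eventually, the unique loop block containing $W_j$ must be dominant on both sides. For condition~2, I would observe that if $W_j$ is free and contained in a dominant prefix loop block $B$, then the splitting $W=X_jW_jY_j$ breaks $B$ into two pieces, each still dominant, contributing $1$ to each of $d_X$ and $d_Y$; hence $d_X+d_Y=d+1$, where $d$ is the total number of dominant prefix blocks in~$W$. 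So (given condition~1) condition~2 is equivalent to $d=D$, which by Proposition~\ref{pro-growth} holds with probability $1-\cO(1/n)$, since transition paths with fewer dominant states contribute words of total count $\cO(n^{D-2}\gamma^n)=\cO(|\C_n|/n)$.

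The hard part is condition~4. I would fix, in addition to $j$, the prefix transition path $T$, the transition symbols $\tau$, the weights of all loop blocks, and the contents of every loop block except the block $B^*$ containing~$W_j$; under this conditioning $B^*$ is uniformly distributed over $\L_j^\ast$ of some prescribed weight $m\ge n/Q-K$. A Boltzmann-sampler argument in the spirit of Proposition~\ref{pro-occur}, applied to the subword of $B^*$ whose in-block cumulative weight lies in the range corresponding to the slice, should give $\Omega(n)$ disjoint $P$-blocks inside $W_j$ with probability $1-2^{-\Omega(\sqrt n)}$ for any fixed pattern $P$ over $\L_j$. A union bound over the finitely many choices of $\L_j$ and $P$ (with the bounded length $\max\{4,|\L_j|\}$) then gives condition~4 with exponentially small failure probability. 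The main obstacle is that the slice boundary inside $B^*$ is itself a random function of the letters of $B^*$, so one has to decouple the slice location from the $P$-block counting when adapting the i.i.d.\ analysis underlying Proposition~\ref{pro-occur}; the standard Chernoff-type concentration for cumulative weights should however keep the slice boundary inside a tight window, inside which the $P$-block count is concentrated as well.

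Condition~3 should then follow essentially for free via Lemma~\ref{lem-tight}. Condition~1 makes both $p_j$ and $s_j$ dominant states. Condition~4 forces every letter of $\L_{p_j}$ to occur in~$W_j$; but $W_j$ is simultaneously a word over $\coL_{s_j}$, so $\L_{p_j}\subseteq \coL_{s_j}$. A strict inclusion would, by Lemma~\ref{lem-grow}, imply that these two alphabets have distinct growth rates, contradicting the fact that both states are dominant; hence $\L_{p_j}=\coL_{s_j}$. Compatibility of $p_j$ and $s_j$ is immediate from $W\in\C$ (otherwise a forbidden pattern could be embedded by concatenating partial embeddings in $X_j$ and $Y_j$). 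All hypotheses of Lemma~\ref{lem-tight} are then in place, and the lemma delivers that $p_j$ and $s_j$ match, which is condition~3.
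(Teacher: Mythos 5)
Your overall plan -- handle the four conditions one at a time and union-bound over the $\cO(1)$ choices of $j$ and discrete data -- matches the paper, and your treatment of condition~1 via Lemma~\ref{lem-nondom} is exactly the paper's. But there are two genuine gaps.

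\textbf{Condition~2.} Your structural identity $d_X+d_Y=d+1$ (with $d$ the number of dominant \emph{prefix} blocks of $W$) is false in general: $d_Y$ counts dominant blocks in the \emph{suffix} decomposition of $Y_j$, and the suffix decomposition is not obtained by truncating the prefix decomposition. The dominant prefix blocks $B_1,\dots,B_d$ of $W$ and the dominant suffix blocks $B'_d,\dots,B'_1$ of $W$ are different intervals, and there is no a priori reason that $W_j \subseteq B_i$ and $W_j\subseteq B'_{i'}$ should force $i+i'=d+1$. For instance, a misalignment of the two decompositions can put $W_j$ in the leftmost dominant prefix block and simultaneously in the rightmost dominant suffix block, giving $d_X+d_Y=2$ even when $d=D\ge 2$. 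Lemma~\ref{lem-xydom} only gives the upper bound $d_X+d_Y\le D+1$; the matching lower bound is a probabilistic statement and must be argued probabilistically. The paper does this by a direct counting estimate (conditioning on $\weight(X_j),\weight(Y_j)$, counting the $\Theta(m^{d_X-1}\gamma^{(j-1)m})$ choices of $X_j$, $\Theta(m^{d_Y-1}\gamma^{(Q-j)m})$ choices of $Y_j$, and $\Theta(\gamma^m)$ choices of $W_j$), not by a deterministic implication from $d=D$.

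\textbf{Condition~4.} You identify the real obstruction -- the slice boundary inside $B^*$ depends on the letters of $B^*$ itself -- but you do not resolve it, and the "tight window'' heuristic is not a proof: Proposition~\ref{pro-occur} is stated for a uniformly random word of fixed weight, and conditioning on the random boundary breaks that uniformity in a way that needs care. The paper's resolution is cleaner and worth noting: condition on $X_j$, $Y_j$ and the \emph{last symbol} of $W_j$, and observe that for any $W'_j\in\L^\ast$ of the same weight and the same final symbol, the word $X_jW'_jY_j$ is again in $\C_n(j)$ with the same slice boundaries. Hence $W_j$ with its last symbol removed is a uniformly random element of $\L^\ast$ of a fixed weight, and Proposition~\ref{pro-occur} applies directly. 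Note also a circularity risk in your setup: you condition so that $B^*$ is uniform over $\L_j^\ast$, but the alphabet that governs $W_j$'s letters is $\L=\L_j\cap\coL_{s_j}$; you cannot know $\L=\L_j$ until you have already established what you are trying to prove. The paper sidesteps this by proving $\L_j=\coL_j$ with a separate counting argument \emph{before} tackling conditions~2--4.

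\textbf{Condition~3.} Your route -- get $\L_{p_j}\subseteq\coL_{s_j}$ from condition~4 (every letter of $\L_j$ occurs in $W_j$, and $W_j$ is over $\coL_{s_j}$), then force equality from dominance, then apply Lemma~\ref{lem-tight} -- is a nice alternative to the paper's, which instead proves $\L_j=\coL_j$ by counting and then invokes Lemma~\ref{lem-tight}. But since it is conditioned on conditions~2 and~4, which your proof does not yet establish, it cannot stand on its own.

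In short: the framework is right, but conditions~2 and~4 need the paper's counting and conditioning arguments rather than the structural shortcut and the acknowledged hand-wave.
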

\begin{proof}
Lemma~\ref{lem-nondom} shows that with probability $1-2^{-\Omega(\sqrt n)}$, the total weight of 
non-dominant 
loop blocks in $W$ is at most $\sqrt n$, and in particular, for $n$ large enough, no slice can be 
contained in a single non-dominant loop block. Therefore, any free slice is contained in a dominant 
loop block, both in the prefix and in the suffix decomposition. 

Let $W_j$ be a free slice, and let $X_j$, $Y_j$, $p_j$ and $s_j$ be as in the definition of 
typical word. Let $\L_j$ be the prefix loop alphabet of $p_j$, and let $\coL_j$ be the suffix loop 
alphabet of~$s_j$. Define $\L=\L_j\cap\coL_j$. Since $W_j$ contains no prefix or suffix transitions, 
all its symbols belong to~$\L$. 

Let us prove that with high probability, $\L_j=\coL_j$. Suppose that this is not the case, and 
without loss of generality assume that $\L$ is a proper subset of~$\L_j$. As we have seen, $p_j$ 
and $s_j$ are (with high probability) dominant states, so they both have growth rate~$\gamma$. Since 
$\L$ is a proper subset of $\L_j$, we may deduce from Lemma~\ref{lem-grow} that the growth rate 
of $\L^*$ is strictly smaller than~$\gamma$. Let $\gamma_\L$ be the growth rate of~$\L^\ast$. 

Let $Z$ denote the word $X_jY_j$. From the knowledge of $j$ and $Z$, we can uniquely 
recover the first $j-1$ slices $W_1,\dotsc,W_{j-1}$, and therefore also $Y_j$, $p_j$, $s_j$ 
and~$\L$. Also, $\weight(Z)$ can take at most $2K-1$ values, since
\[
n(1-1/Q)-K<\weight(Z)<n(1-1/Q)+K.
\] 
It follows that there are $\cO(n^{D-1}\gamma^{n(1-1/Q)})$ possible choices of $j$ and $Z$, and for 
each such choice, no more than $\cO(\gamma_\L^{n/Q})$ choices for~$W_j$. In total there are at most 
\[
 \cO(n^{D-1}\gamma^{n(1-1/Q)}\gamma_\L^{n/Q})=o(|\C_n|/n)
\]
possible words $W\in\C_n$ that have a free slice $W_j$ with~$\L_j\neq\coL_j$. 

Focus now on the situation when $\L_j=\coL_j=\L$ for every $j$ such that $W_j$ is a free slice. 
Let us fix a value of $j\in[Q]$, and let us prove that there are at most $O(|\C_n|/n)$ 
words $W\in\C_n(j)$ for which $d_X+d_Y < D+1$. 
%By Lemma~\ref{lem-xydom}, this inequality implies $d_X+d_Y\le D$. 
We already know from the previous arguments that we may restrict our attention to 
cases when $W_j$ is inside a dominant loop block both in the prefix and the suffix decomposition, 
which implies that $d_X$ and $d_Y$ are both nonzero.

Define $m=n/Q$. Note that $\weight(X_j)$ can only take one of the $K$ values in the range 
$[(j-1)m,(j-1)m+K)$, and similarly, $\weight(Y_j)$ is in the range $((Q-j)m-K,(Q-j)m]$ and 
$\weight(W_j)$ is in $(m-K,m+K)$.

From Lemma~\ref{lem-grow}, the number of possible choices for $X_j$ and $Y_j$ is, respectively, 
$\Theta(m^{d_X-1}\gamma^{(j-1)m})$ and $\Theta(m^{d_Y-1}\gamma^{(Q-j)m})$. Together with the 
$\Theta(\gamma^{m})$ choices for $W_j\in\L^\ast_m$, this yields no more than 
$\cO(m^{d_X+d_Y-2}\gamma^{Qm})\le\cO(|\C_n|/n)$ possibilities for a word $W\in\C_n(j)$ that fails to 
satisfy $d_X+d_Y=D+1$. 

By Lemma~\ref{lem-tight}, the above conditions already imply that $p_j$ and $s_j$ match with 
probability $1-\cO(1/n)$. 

To prove the last condition of typicality, choose a uniformly random $W\in\C_n(j)$ for some fixed 
$j$. Let $W'_j$ be a word over $\L^\ast$ that has the same weight and the same final symbol as 
$W_j$, and define $W'=X_jW'_jY_j$. Note that $W'$ again belongs to $\C_n(j)$ and that $W'_j$ is its 
$j$-th slice (the reason we require that $W'_j$ has the same final symbol as $W_j$ is to ensure that 
the boundary between the $j$-th and $(j+1)$-st slice is preserved). In particular, for a uniformly 
random $W\in\C_n(j)$, the word obtained from $W_j$ by removing its last symbol is a uniformly random 
word over $\L^\ast$, that is, any two words from $\L^\ast$ of the same weight are equally likely to 
be obtained this way. From Proposition~\ref{pro-occur}, we then deduce that, for $n$ large enough 
and up to exponentially small probability, for every word $P\in\L^*$ of constant length, $W_j$ has 
at least $\sqrt n$ disjoint $P$-blocks. Since with probability $1-\cO(1/n)$, $\L$ is 
equal to $\L_j$, the last condition of typicality follows.
\end{proof}

We are now almost ready to prove Theorem~\ref{thm-finind}. For the final argument, we will 
distinguish two cases. First, we will deal with classes of exponential growth, i.e., those 
with~$\gamma>1$. Equivalently, those are the classes whose dominant loop alphabets contain more than 
one element. 

Next, we will handle the classes with $\gamma=1$, i.e., those whose every loop alphabet is either 
empty or contains the single symbol~$1$. In order to have unbounded growth, such a class must then
satisfy $D>1$. 

\newcommand{\cont}{\widehat}

\begin{proposition}
\label{pro-exp} 
Any permutation class $\C$ with growth rate greater than 1 and with finitely many 
sum-indecomposables exhibits a Wilf collapse.
\end{proposition}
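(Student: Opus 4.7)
The strategy is to apply the block-swap technique from the proof of Theorem~\ref{thm-sumclosed} inside a free slice of a typical word, and then to lift the resulting word-level equivalences to Wilf-equivalences in the full class $\C$ by exploiting the matching-state machinery developed above.

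By Lemma~\ref{lem-typ2}, a uniformly random $W \in \C_n$ is typical except with probability $\cO(1/n)$. Fix any free slice $W_j$ of such a $W$. Typicality guarantees that $W_j$ lies inside a dominant loop block both in the prefix and in the suffix decomposition of $W$, and that the prefix state $p_j$ immediately before $W_j$ matches the suffix state $s_j$ immediately after, so that $\L_j = \coL_{s_j}$ by Lemma~\ref{lem-match1}. Since $\gamma > 1$ and $p_j$ is dominant, Lemma~\ref{lem-grow} forces $|\L_j| \geq 2$. Finiteness of $\A$ then allows us to pick an $\A$-incompatible pair $(a,b)$ together with two distinct letters $c,d$, all lying inside $\L_j$; for instance, if $K$ is the maximum weight of a letter in $\A$, then any letter of $\L_j$ of weight exceeding $K/2$ forms an incompatible pair with itself, and any two remaining letters of $\L_j$ supply $c$ and $d$. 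Form the blocks $P = abcabdab$ and $P' = abdabcab$ exactly as in the proof of Theorem~\ref{thm-sumclosed}.

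The crucial claim is that if $W'$ is obtained from $W$ by swapping any subset of the disjoint $P$-blocks inside $W_j$ to $P'$-blocks, then $W' \in \C$ and $W \wwe_{\C} W'$. Membership of $W'$ in $\C$ is immediate: the substitution is carried out entirely within the loop alphabet $\L_j$, so the prefix and suffix transition paths of $W$ are unaltered. For the Wilf-equivalence itself, one mimics the derivation of Corollary~\ref{cor-incompatible-shuffle} from Proposition~\ref{prop-incompatible-factorisation}, but now tallying $\C$-words rather than arbitrary words of $\A^\ast$, and using the greedy-embedding characterisation of Proposition~\ref{prop-greed-is-good}. The strong form of the matching property supplied by Lemma~\ref{lem-match2}(IV) is decisive here: it prohibits any letter outside $\L_j$ from being inserted between $X_j$ and $Y_j$ while staying inside $\C$, and thus forces the same factorisation of the $\C$-generating function as the $\A^\ast$-proof exploits. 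Once that lifting is available, the proof of Lemma~\ref{lem-typ2} in fact establishes that $W_j$ contains at least $\sqrt n$ disjoint $P$-blocks for any constant-length motif, so $W$ has at least $2^{\sqrt n}$ permutations in its Wilf class, and the total number of Wilf classes in $\C_n$ is bounded above by $|\C_n|\cdot\cO(1/n) + |\C_n|/2^{\sqrt n} = o(|\C_n|)$, yielding the advertised Wilf collapse.

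The main obstacle is the lifting step: translating the word-level identity $I_P(x) = I_{P'}(x)$ into the $\C$-Wilf-equivalence $W \wwe_{\C} W'$. All of the preceding technical scaffolding --- matching states, the identity $\L_j = \coL_{s_j}$, the tightness $d_X + d_Y = D+1$, and the freeness of the slice --- appears to be engineered precisely to make this lifting go through cleanly, after which the counting argument is routine.
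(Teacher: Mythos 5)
Your high-level strategy matches the paper's: isolate a free slice of a typical word inside a dominant loop block where prefix and suffix states match, perform block swaps there, and lift the resulting $\L^\ast$-Wilf-equivalence to $\C$. However, the proposal has two genuine gaps, and the second one is exactly where the real work of the paper's proof lives.

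First, a smaller issue: you assert that $\L_j$ contains an $\A$-incompatible pair $(a,b)$ because ``any letter of $\L_j$ of weight exceeding $K/2$ forms an incompatible pair with itself,'' where $K$ is the largest weight in $\A$. But $\L_j$ need not contain any such letter --- e.g., $\A$ could contain a large sum-indecomposable permutation while the dominant loop alphabet $\L_j$ is something like $\{1,21\}$. In fact, no $\A$-incompatibility is required: since $\L_j$ is finite, it has a symbol $a$ that is maximal in $\L_j$ under containment, and such an $a$ is incompatible with every $b$ \emph{relative to the alphabet $\L_j$} (there is no $c\in\L_j$ with $ab\le c$, because $a<c$ is impossible). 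This is the incompatibility that Corollary~\ref{cor-incompatible-shuffle} is applied with, inside the sum-closed language $\L_j^\ast$, not inside $\A^\ast$ or $\C$.

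The central gap is the lifting step. You say to ``mimic the derivation of Corollary~\ref{cor-incompatible-shuffle} \dots\ but now tallying $\C$-words,'' and that Lemma~\ref{lem-match2}(IV) ``forces the same factorisation of the $\C$-generating function.'' That is not an argument, and it is not true as stated: the identity $I_W=\prod I^\ast_{W_i}/(1-A)$ relies on $\A^\ast$ being a free monoid, while $\C$ is not sum-closed in this subsection, so the $\C$-version of $I_W$ does not factor. The matching property does guarantee that the symbols of any container of $W$ lying strictly between the embedded images of $X_j$ and $Y_j$ belong to $\L_j$, but it does not by itself control \emph{where} the greedy embedding of $X_j W_j Y_j$ places the boundary: a large letter of $\cont W$ can simultaneously absorb the tail of $X_j$ and the head of $W_j$, spoiling any clean factorisation. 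The paper's proof handles this by (i) taking \emph{three} consecutive free slices $W_{j-1}W_jW_{j+1}$, not one; (ii) choosing $a$ maximal in $\L$, factoring $W_{j-1}=W_{j-1}^L a W_{j-1}^R$ and $W_{j+1}=W_{j+1}^L a W_{j+1}^R$, and arguing that the greedy prefix $\cont X^+$ of any container ends in exactly the symbol $a$ (because $W_{j-1}^L$ is too heavy to fit in the last symbol of $\cont X$, and the symbols past $\cont X$ all lie in $\L$ where $a$ is maximal); and (iii) building an \emph{explicit weight-preserving bijection} $\cont W=\cont X^+\cont Z\cont Y^+\mapsto \cont X^+\Phi(\cont Z)\cont Y^+$, where $\Phi$ witnesses the $\L^\ast$-Wilf-equivalence. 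Without these buffers and the maximal-symbol boundary control, the map ``replace $P$-blocks by $P'$-blocks inside $W_j$'' is not shown to preserve the number of $\C$-containers, and so the claimed Wilf-equivalence in $\C$ is unjustified.
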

\begin{proof}
It is enough to show that every typical word $W\in\C_n$ belongs to a Wilf class of size 
$2^{\Omega(\sqrt n)}$. We will assume throughout that $n$ is large enough in comparison to the 
constants $K$, $D$ and~$Q$. 

\begin{figure}
 \includegraphics[width=\textwidth]{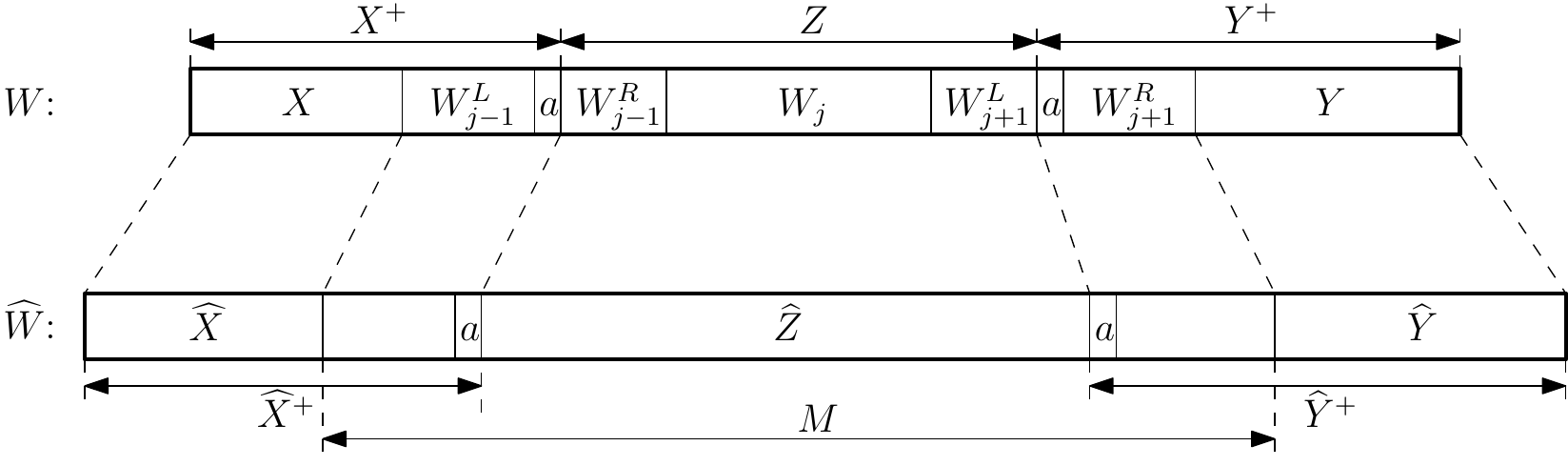}
\caption{Illustration of the proof of Proposition~\ref{pro-exp}}\label{fig-exp}
\end{figure}

Choose a typical word $W\in\C_n$, and let $W_1W_2\dotsb W_Q$ be its equitable partition. By the 
choice of $Q$, we know that there is an index $j\in\{2,\dotsc,Q-1\}$ such that the three slices 
$W_{j-1}$, $W_j$ and $W_{j+1}$ are all free. Write $W$ as $W=XW_{j-1}W_jW_{j+1}Y$, with $X=W_1\dotsc 
W_{j-2}$ and $Y=W_{j+2}\dotsb W_Q$; see Figure~\ref{fig-exp}. Let $p$ be the prefix state of $X$ 
and $s$ the suffix state of~$Y$.  

By typicality, we know that $p$ and $s$ match, and therefore they share a common loop 
alphabet~$\L$. Moreover, $p$ and $s$ are dominant, and therefore $\L^\ast$ has growth rate 
$\gamma>1$. It follows that $|\L|\ge 2$. Let $a\in\L$ be a maximal symbol of~$\L$ in the 
containment relation.

Write $W_{j-1}$ as a concatenation of the form $W_{j-1}^LaW_{j-1}^R$, where $W_{j-1}^R$ is the 
longest suffix of $W_{j-1}$ that has no occurrence of $a$. Note that such decomposition is 
possible, 
since $W_{j-1}$ contains the symbol $a$ by typicality. In fact, $W_{j-1}^L$ contains at least 
$\sqrt{n}-1$ disjoint occurrences of $aaaa$, and therefore has weight more than $K$, for $n$ large 
enough. Symmetrically, we partition $W_{j+1}$ as $W_{j+1}^L a W_{j+1}^R$, with $W_{j+1}^L$ being the 
longest prefix with no occurrence of the symbol~$a$. Define now $X^+=XW_{j-1}^La$, 
$Y^+=aW_{j+1}^RY$, and $Z=W_{j-1}^R W_{j} W_{j+1}^L$, so that the word $W$ can be written as 
$W=X^+ Z Y^+$. 

We claim that if $Z'\in\L^\ast$ is Wilf-equivalent to $Z$ in the class $\L^\ast$, then $W=X^+ Z Y^+$ 
is Wilf-equivalent to $W'=X^+ Z' Y^+$ in~$\C$. To see this, assume that $\Phi$ is a weight-preserving
bijection mapping words in $\L^\ast$ containing $Z$ to those that contain~$Z'$. We now describe a 
weight preserving bijection from words of $\C$ containing $W$ to those that contain~$W'$. 

Let $\cont W\in\C$ be a word that contains~$W$. Let $\cont X$ be the shortest prefix of $\cont W$ 
that contains $X$, and $\cont Y$ the shortest suffix of $\cont W$ containing~$Y$. Let $M$ 
be the `middle' part of $\cont W$ between $\cont X$ and $\cont Y$, i.e., $\cont W=\cont X
M\cont Y$. Observe that all the symbols of $M$ belong to $\L$: if $M$ contained a 
symbol $c\not\in \L$, then $\cont W$ would contain $XcY$ as a subword; however, since the prefix 
state of $X$ matches the suffix state of $Y$, $XcY$ is not in~$\C$. This is a contradiction, as 
$\cont W$ is in~$\C$.

Let $\cont X^+$ be the shortest prefix of $W$ containing $X^+$. Clearly, $\cont X$ is a prefix of 
$\cont X^+$. It is possible that in an embedding of $X^+$ into $\cont X^+$, one or 
more initial symbols of $W_{j-1}^L$ get mapped to the last symbol of $\cont X$. However, since 
$W_{j-1}^L$ has weight greater than $K$, it cannot be fully contained in the last symbol of $\cont 
X$, and in particular, some of its symbols get mapped into~$M$. Consequently, the final symbol of 
$X^+$ (which is the symbol $a$) gets mapped to a symbol of~$M$. Since $a$ is a maximal symbol 
of $\L$, and $M$ only contains symbols from $\L$, we conclude that the final symbol of $\cont X^+$ 
is also the symbol~$a$. Symmetrically, let $\cont Y^+$ be the shortest suffix of $\cont W$ 
containing~$Y^+$. We again conclude that the first symbol of $\cont Y^+$ is the symbol~$a$.

Let $\cont p$ be the prefix state of $\cont X^+$ and $\cont s$ the suffix state of~$\cont Y^+$. We 
claim that both these states have loop alphabet~$\L$. To see this, let $U$ be a word obtained by 
concatenating the maximal symbols of $\L$ in any order. By typicality, $W_{j-1}$ has at least 
$\sqrt n$ disjoint $U$-blocks, of which at least $\sqrt n-1$ are in $W_{j-1}^L$ (recall that 
$W_{j-1}^R$ has no occurrence of the maximal symbol $a$). When embedding 
$X^+$ into $\cont X^+$, at most $K$ of these $U$-blocks can be embedded into the last symbol of 
$\cont X$, but for $n$ large enough, at least $Q$ of these $U$ blocks are embedded into $\cont 
X^+\setminus \cont X$ (i.e., the suffix of $\cont X^+$ that follows after $\cont X$). That means 
that $\cont X^+\setminus \cont X$ contains $Q$ disjoint blocks $U_1, \dotsc,U_Q$, each 
containing~$U$ as 
a subword. Since the symbols of $U$ are maximal in $\L$, each $U_i$ must in fact contain the 
symbols of $U$ as a subsequence. Since $\cont X^+$ has at most $Q-1$ prefix state transitions, there 
is a $U_i$ which does not have any state transition. Therefore $U_i$ is in a loop block of 
a prefix state $p'$ whose loop alphabet $\L_{p'}$ contains all the symbols of $U$, and therefore 
also all the symbols of~$\L$. Since $\L$ is a dominant loop alphabet, this means that $\L_{p'}=\L$. 
Since all the symbols of $\cont X^+$ after $U_i$ belong to $\L$, there are no more prefix state 
transitions after $U_i$, and $p'=\cont p$. This shows that $\cont p$ has loop alphabet $\L$, and a 
symmetric argument applies to $\cont s$ as well. 

Let $\cont Z$ be the part of $\cont W$ between $\cont X^+$ and $\cont Y^+$. Since $\cont W$ 
contains $W$, we conclude that $\cont Z$ contains~$Z$. Note that here we use the fact that the last 
symbol of $X^+$ is equal to the last symbol of $\cont X^+$ and similarly for $Y^+$; this guarantees 
that in any 
embedding of $W$ into $\cont W$, no symbol from $Z$ can be mapped to the last symbol of $\cont X^+$ 
or the first symbol of~$\cont Y^+$, and in particular $Z$ gets mapped entirely into~$\cont Z$. We 
now 
define $\cont Z'=\Phi(\cont Z)$ and  $\cont W'=\cont X^+\cont Z' \cont Y^+$. Since $\cont Z'$ 
contains $Z'$, $\cont W'$ 
contains~$W'$. The mapping $\cont W\mapsto\cont W'$ is easily seen to be the required bijection 
from words containing $W$ to words containing~$W'$ in the class~$\C$. 

To prove the proposition, it now suffices to show that there are many words Wilf-equivalent to $Z$ 
in the class~$\L^\ast$. This, however, can be easily done. Recall that $a$ is a maximal symbol of 
$\L$, and let $b$ be any other symbol of $\L$ (here we use that $|\L|>1$). The word $Z$ contains the 
free slice $W_j$, which, by typicality, contains at least $\sqrt n$ disjoint block occurrences of 
the word~$aaba$. By Corollary~\ref{cor-incompatible-shuffle}, replacing any such occurrence by a 
block occurrence of $abaa$ preserves the Wilf class in $\L^\ast$, since $a$ is maximal in $\L$ and 
hence the pairs $(a,b)$ and $(b,a)$ are incompatible. This yields at least $2^{\sqrt n}$ words in 
the $\L^\ast$-Wilf class of~$Z$, and therefore also in the $\C$-Wilf class of any typical word~$W$. 
\end{proof}

Let us say that a permutation $\C$  with finitely many sum-indecomposables is an \emph{unbounded 
polynomial class} if its growth rate $\gamma$ is equal to 1 and its growth function is unbounded. 
For the rest of this section, we will only consider unbounded polynomial classes. Since any such 
class $\C$ has growth rate $1$, it follows that every dominant prefix or suffix state has loop 
alphabet $\{1\}$, while every non-dominant state has empty loop alphabet. In particular, any 
$W\in\C$ has fewer than $Q$ symbols not belonging to dominant loop blocks, and all these symbols 
are transition symbols. Consequently, each $\C_n$ has only a bounded number of words that have at 
most one dominant loop block in their prefix or suffix decomposition. Since $|\C_n|$ is unbounded, 
it follows that $D>1$. Observe that Proposition~\ref{pro-growth} implies that with probability 
$1-\cO(1/n)$, a uniformly random $W\in\C_n$ has $D$ dominant loop blocks in both its prefix and its 
suffix decomposition.

Let us say that a loop block in the prefix or suffix decomposition of a word $W\in\C_n$ is 
\emph{large} if it has length (or equivalently weight) at least $2KQ+1$. We say that a letter in a 
loop block is \emph{central} if the loop block contains at least $KQ$ letters preceding it and also 
at least $KQ$ letters following it. In particular, each large block has at least one central letter.

\begin{lemma}\label{lem-bigblocks} Let $\C$ be an unbounded polynomial class. With probability 
$1-\cO(1/n)$, in a uniformly random $W\in\C_n$, all the dominant loop blocks in the prefix and 
suffix decomposition are large. 
\end{lemma}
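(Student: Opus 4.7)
The plan is to exploit the rigidity of unbounded polynomial classes to reduce the counting to nonnegative integer compositions. Recall the preceding paragraph observes that since $\gamma=1$, every dominant prefix (resp.\ suffix) state has loop alphabet $\{1\}$ while every non-dominant state has empty loop alphabet. Hence, once we fix a prefix transition path $T=(p(0)<p(1)<\dotsb<p(k))$ with transition symbols $\tau=(t_1,\dotsc,t_k)$, a word $W\in\C_n$ with this path and transitions is uniquely determined by its prefix decomposition $W=1^{b_0}t_1 1^{b_1}t_2\dotsb t_k 1^{b_k}$, where $b_j\ge 0$ if $p(j)$ is dominant and $b_j=0$ otherwise, and $\sum b_j=n-\sum\weight(t_i)$.

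First I would recall from Proposition~\ref{pro-growth} that $|\C_n|=\Theta(n^{D-1})$, so the transition paths contributing at the leading order are exactly those with $d_T=D$ dominant states; paths with $d_T<D$ contribute only $O(n^{D-2})=\cO(|\C_n|/n)$ words and can be discarded. For the remaining paths, I would fix $T$, $\tau$, a dominant position $j$, and a specific value $b_j<2KQ+1$, and count the number of ways to choose the remaining $D-1$ nonnegative integers summing to $n-O(1)$; by the standard stars-and-bars formula this is $\Theta(n^{D-2})$. Summing over the finitely many choices of $T$, $\tau$, $j$, and the $O(1)$ possible small values of $b_j$, the total number of words $W\in\C_n$ in which some dominant prefix loop block has length less than $2KQ+1$ is $O(n^{D-2})=\cO(|\C_n|/n)$.

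The suffix decomposition is handled by an entirely symmetric argument (using the remark in the paragraph after Lemma~\ref{lem-nondom} that the maximum number of dominant blocks in a suffix decomposition also equals $D$). A union bound over the two decompositions yields the desired $\cO(1/n)$ failure probability.

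I do not expect a substantive obstacle: the proof is a straightforward counting argument once one recognizes that in the polynomial regime the only freedom is in the lengths of the $D$ dominant runs of $1$'s, and fixing any one of these lengths to a bounded value costs a factor of $n$ in the count. The only point requiring a small amount of care is verifying that restricting to paths with $d_T=D$ is legitimate, but this follows immediately from Proposition~\ref{pro-growth} as applied in the proof of Lemma~\ref{lem-nondom}.
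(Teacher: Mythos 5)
Your proposal is correct and takes essentially the same approach as the paper: both proofs reduce to the counting in Proposition~\ref{pro-growth}, observing that fixing the length of any one dominant loop block to a bounded value costs a factor of $n$, yielding $O(n^{D-2}) = O(|\C_n|/n)$ bad words, then summing over the finitely many choices of transition path, transition symbols, dominant position, and small block length, with the suffix decomposition handled symmetrically. Your version is a bit more explicit than the paper's (which cites ``the calculations in the proof of Proposition~\ref{pro-growth}'' without unpacking them): by specializing immediately to $\gamma=1$ you can make the dominant blocks literally runs of $1$'s and invoke stars-and-bars directly, which is a clean way to see why the exponent drops by one, whereas the paper's argument stays at the level of generating-function asymptotics and so would also cover the exponential case.
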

\begin{proof}
Recall the notation $\C_n(T,\tau)$ from Proposition~\ref{pro-growth}. Let 
$\C_n(T,\tau,i,j)$ denote the set of those elements of $\C_n(T,\tau)$ whose 
$i$-th dominant prefix loop block has weight~$j$. It follows from the 
calculations in the proof of Proposition~\ref{pro-growth} that 
$\C_n(T,\tau,i,j)$ has size $O(n^{d_T-2})$ for any fixed $i$ and~$j$, where 
$d_T$ is the number of dominant states in~$T$. Summing these contributions over 
all $T$, $\tau$, $i\le D$ and $j\le 2KQ$, we conclude that there are at most 
$O(n^{D-2})$ words in $\C_n$ that have a small dominant loop block.
\end{proof}

\begin{lemma}\label{lem-dw} Let $\C$ be an unbounded polynomial class. Let $W\in\C_n$ be a word 
with $k$ dominant loop blocks in its prefix decomposition, and suppose that all these loop blocks 
are large. Let $\cont W\in\C$ be a word that contains~$W$. Then $\cont W$ has at least $k$ 
dominant loop blocks in its prefix decomposition. Moreover, if $\cont W$ has exactly $k$ dominant 
loop blocks, then in every embedding of $W$ into $\cont W$, for any dominant loop block $B_i$ of 
$W$, all the central symbols of $B_i$ are mapped to loop symbols of~$\cont W$. Analogous properties 
hold for suffix decompositions as well.
\end{lemma}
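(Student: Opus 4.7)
The plan is to leverage the fact that in an unbounded polynomial class every dominant loop alphabet equals $\{1\}$ and every non-dominant loop alphabet is empty, so that any letter of weight at least $2$ appearing in a word of $\C$ necessarily occupies a transition position of its prefix decomposition. I fix a factorisation $\cont W = c_1c_2\cdots c_L$ together with a factorisation $W = W_1W_2\cdots W_L$ with $W_j\leq c_j$ witnessing the containment $W\leq\cont W$, and write $f$ for the map sending each letter of $W$ to the index of the letter of $\cont W$ absorbing it. Since each $c_j$ has weight at most $K$ while the letters of any dominant block $B_i^W$ of $W$ are all copies of $1$, the image of such a block spans at least $|B_i^W|/K \geq 2Q+1$ consecutive positions of $\cont W$.

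The key step, and what I expect to be the main obstacle, is to argue that the transitions in $W$ separating consecutive dominant blocks are forced to map to transition positions of $\cont W$. For each $\ell\in\{1,\dotsc,k-1\}$, let $\tau_\ell$ be the transition letter of $W$ immediately following the $\ell$-th dominant loop block $B_\ell^W$. Since $\tau_\ell$ exits a dominant prefix state with loop alphabet $\{1\}$, we must have $\tau_\ell \neq 1$, so $\weight(c_{f(\tau_\ell)}) \geq \weight(\tau_\ell) \geq 2$. Such a letter cannot be a loop letter of $\cont W$ (dominant loop letters are $1$, while non-dominant loop blocks are empty), and must therefore itself be a transition letter of $\cont W$. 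The positions $f(\tau_1) < \dotsb < f(\tau_{k-1})$ are then distinct transition positions of $\cont W$, splitting it into $k$ intervals, the $\ell$-th of which receives the image of $B_\ell^W$. Because this interval contains at least $2Q+1$ positions while $\cont W$ has at most $|\P|-1 < Q$ transition positions in total, the interval must contain a loop position, which lies in some dominant loop block $\widehat B_\ell$ of $\cont W$. The blocks $\widehat B_1, \dotsc, \widehat B_k$ live in different intervals, hence are distinct, giving the first claim.

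For the second claim, assume $\cont W$ has exactly $k$ dominant loop blocks; then each $\widehat B_\ell$ is the unique loop block in the $\ell$-th interval. Let $\sigma$ be a central letter of $B_\ell^W$ and suppose, for contradiction, that $f(\sigma)$ is a transition position of $\cont W$ lying to the left of $\widehat B_\ell$ in its interval; the symmetric right case is handled identically. Then $\sigma$ together with the $KQ$ letters of $B_\ell^W$ preceding it, totalling at least $KQ+1$ letters, all have image in the range from the left endpoint of the $\ell$-th interval up to $f(\sigma)$; by assumption this range contains no loop positions of $\cont W$, so its positions are all transition positions of $\cont W$, of which there are at most $|\P|-1$ altogether. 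Since each transition position absorbs at most $K$ letters of $W$, one would need $KQ+1 \leq K(|\P|-1)$, forcing $|\P| > Q$ and contradicting $|\P| \leq Q/3$. A minor technicality to track throughout is that the image of $\tau_\ell$ may coincide with the image of an adjacent letter of $B_\ell^W$ or $B_{\ell+1}^W$; the combined $c_j$ then has weight at least $3$ and remains a transition letter, so the partition of $\cont W$ into $k$ intervals is unaffected. The suffix-decomposition statement follows by the mirror-image argument.
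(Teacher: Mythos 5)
Your proof is correct and follows essentially the same approach as the paper's: both arguments observe that the separator symbols $t_i$ (your $\tau_\ell$) between consecutive dominant blocks of $W$ have weight at least $2$ and hence must map to transition symbols of $\cont W$, and both then play the weight bound $K$ against the fact that $\cont W$ has fewer than $Q$ transition positions in total to show that each resulting interval is long enough (at least $2Q+1$ positions) to contain a loop block, and to confine the images of central symbols of each $B_i$ to that loop block. Your write-up spells out the final counting step for the second claim a bit more explicitly than the paper does, but the substance is identical.
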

\begin{proof} Let $B_1,\dotsc,B_k$ be the dominant loop blocks in the prefix decomposition of~$W$. 
For $i\in[k-1]$, let $t_i$ be the symbol of $W$ immediately following~$B_i$. Necessarily, $t_i$ is 
different from 1. In particular, for every embedding of $W$ into $\cont W$, the symbol $t_i$ gets 
mapped to a transition symbol $\cont t_i$ of~$\cont W$. For $2\le i\le k-1$, let $\cont W_i$ be the 
subword of $\cont W$ between $\cont t_{i-1}$ and $\cont t_i$, including the two symbols $\cont 
t_{i-1}$ and $\cont t_i$ themselves; we also define $\cont W_1$ as the prefix of $\cont W$ ending in 
$\cont t_1$ and $\cont W_k$ as the suffix of $\cont W$ starting in $\cont t_{k-1}$. Since $B_i$ is 
mapped into $\cont W_i$ and $B_i$ is large, $\cont W_i$ has at least $2Q+1$ symbols, and therefore 
each $\cont W_i$ has a loop symbol. Since each $\cont W_i$ begins or ends with a transition symbol, 
$\cont W$ has at least $k$ nonempty loop blocks, which are necessarily dominant.

Suppose now that $\cont W$ has exactly $k$ dominant loop blocks. It follows that each $\cont W_i$ 
contains a unique dominant loop block $\cont B_i$ of~$\cont W$. Therefore, all the symbols of $\cont 
W_i$ except perhaps the leftmost $Q$ and the rightmost $Q$ belong to the loop block~$\cont B_i$. 
Since the block $B_i$ is mapped into $\cont W_i$, all its central symbols must get mapped into 
$\cont B_i$.
\end{proof}

\begin{proposition}
\label{pro-poly}
Any unbounded polynomial class $\C$ exhibits a Wilf collapse.
\end{proposition}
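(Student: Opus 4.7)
The plan is to follow the same template as for Proposition \ref{pro-exp}: show that a uniformly random $W\in\C_n$ is typical with probability $1-\cO(1/n)$, and exhibit many permutations Wilf-equivalent to each typical $W$. The novelty is that $\gamma=1$, together with Lemma \ref{lem-grow}, forces every dominant loop alphabet to equal $\{1\}$ and every non-dominant loop alphabet to be empty. Consequently, once the prefix transition path $T$ is fixed, every word of $\C$ compatible with $T$ takes the rigid shape $W=1^{m_0}\alpha_1 1^{m_1}\cdots\alpha_{d-1}1^{m_{d-1}}$, where $d$ is the number of dominant states on $T$, the transition subwords $\alpha_1,\dots,\alpha_{d-1}$ are completely determined by $T$, and the $m_i$'s are non-negative integers summing to $n-\sum_j\weight(\alpha_j)$. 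Lemma \ref{lem-bigblocks} combined with Proposition \ref{pro-growth} implies that a uniformly random $W\in\C_n$ satisfies $d=D$ with all $m_i$'s large, up to a fraction of order $\cO(1/n)$.

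The key claim is a \emph{mass-transfer equivalence}: for a fixed transition path $T$ with $d=D$, the Wilf class of a typical word $W$ depends on the tuple $(m_0,\dots,m_{D-1})$ only through its sum $\sum_i m_i$. Granting this, the typical words contribute only $\cO(n)$ Wilf classes (one per transition path and per value of the total sum), while the atypical words contribute at most $o(n^{D-1})$ singletons simply because there are few of them; hence $w_n=o(n^{D-1})=o(|\C_n|)$ since $D>1$.

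To establish the mass-transfer equivalence I would count containers directly. Fix a container $\cont W\in\C_N$ of $W$; by Lemma \ref{lem-dw}, $\cont W$ has at least $D$ dominant loop blocks, and whenever it has exactly $D$, the central letters of each $1^{m_i}$ are forced into the $i$-th dominant block $1^{\cont m_i}$ of $\cont W$. So the containment $W\le\cont W$ reduces to the numerical conditions $\cont m_i\ge m_i$ for every $i$, plus the embedding of the fixed transition skeleton into the skeleton of $\cont W$, and for any fixed compatible $\cont T$ with exactly $D$ dominant states the number of such $\cont W$ with $\sum\cont m_i=N'$ is $\binom{N'-\sum_i m_i+D-1}{D-1}$, which manifestly depends on $(m_0,\dots,m_{D-1})$ only through its sum. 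The case where $\cont W$ has strictly more than $D$ dominant blocks is more delicate, because $W$ can then embed into $\cont W$ with positional freedom among the extra dominant blocks. I would handle it by constructing a size-preserving bijection between the containers of $W$ and those of $W'=W$-with-one-unit-transferred, obtained by swapping the excess dominant blocks of $\cont W$ so as to relocate the transferred unit. Verifying that this bijection stays inside $\C$---equivalently, that the resulting word has a valid prefix transition path and matching suffix state---is the technically hardest step, and I expect it to rely on the rigidity of matching states proved in Lemmas \ref{lem-match1} through \ref{lem-tight}.
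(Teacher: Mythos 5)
Your template matches the paper's, but you aim for a strictly stronger conclusion than the paper proves, and that stronger claim is exactly where the gap lies. The paper does \emph{not} establish your ``mass-transfer equivalence'' (that the Wilf class of a typical $W$ depends on $(m_0,\dots,m_{D-1})$ only through $\sum_i m_i$). Instead, it proves a much more modest statement: one can move $k$ copies of the symbol~$1$ from one specific dominant block $B$ (the one containing three consecutive free slices, chosen not to be the rightmost) into one specific neighbouring dominant block $B'$ (the leftmost dominant suffix loop block of $Y$ disjoint from $B$). This single-pair transfer already yields a Wilf class of size $\Theta(n)$, and then $w_n \le \cO(n^{D-2}) + \cO(|\C_n|/n) = o(|\C_n|)$ since $D>1$. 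The paper's bijection between containers hinges on the specific decomposition $W=XCY$ (and $Y=UC'V$) together with the matching condition from typicality and Lemmas~\ref{lem-dw} and~\ref{lem-xydom}, which force any container $\cont W$ to split cleanly as $\cont X\cont C\cont Y$ with $\cont C$ consisting entirely of loop symbols. That clean splitting relies on $X$ ending and $Y$ beginning inside the same block $B$; if you tried to transfer between two \emph{arbitrary} dominant blocks, the ``middle'' region of a container would contain transition symbols and the argument would not go through as written. So the case you single out as the ``technically hardest step'' (containers with extra dominant blocks, and more generally arbitrary-pair transfer) is not a technicality you can expect to push through with the existing lemmas; it is genuinely beyond what the paper's machinery delivers, and the paper is careful to avoid needing it.

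There is also a counting slip in your plan. For a fixed transition path $T$ and fixed $n$, the total $\sum_i m_i = n - \sum_j\weight(\alpha_j)$ is \emph{determined}, so if your mass-transfer equivalence held, the typical words would contribute $\cO(1)$ Wilf classes, not $\cO(n)$. As stated, your count of $\cO(n)$ typical Wilf classes would actually \emph{fail} to give a collapse when $D=2$ (since then $|\C_n|=\Theta(n)$), whereas the paper's size-$\Theta(n)$-Wilf-class argument works uniformly for all $D\ge 2$. The correct conclusion, if you could prove mass-transfer, would be $w_n=\cO(n^{D-2})$, but as explained above the equivalence itself is the missing link, and you should instead aim for the weaker two-block transfer that the paper actually carries out.
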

\begin{proof}
Choose a uniformly random word $W\in\C_n$. With probability at least $1-\cO(1/n)$, the word is 
typical, has $D$ dominant loop blocks in both the prefix and the suffix decomposition, and each of 
these loop blocks is large. By typicality, we know that $W$ has a sequence of three slices 
$W_{j-1}W_jW_{j+1}$ that are all contained in a single prefix loop block~$B$. We may assume, 
without loss of generality, that $B$ is not the rightmost dominant prefix loop block of $W$: 
if $B$ were the rightmost dominant loop block, we would consider suffix decompositions 
instead of prefix ones, and apply the following argument symmetrically.

\begin{figure}
 \includegraphics[width=\textwidth]{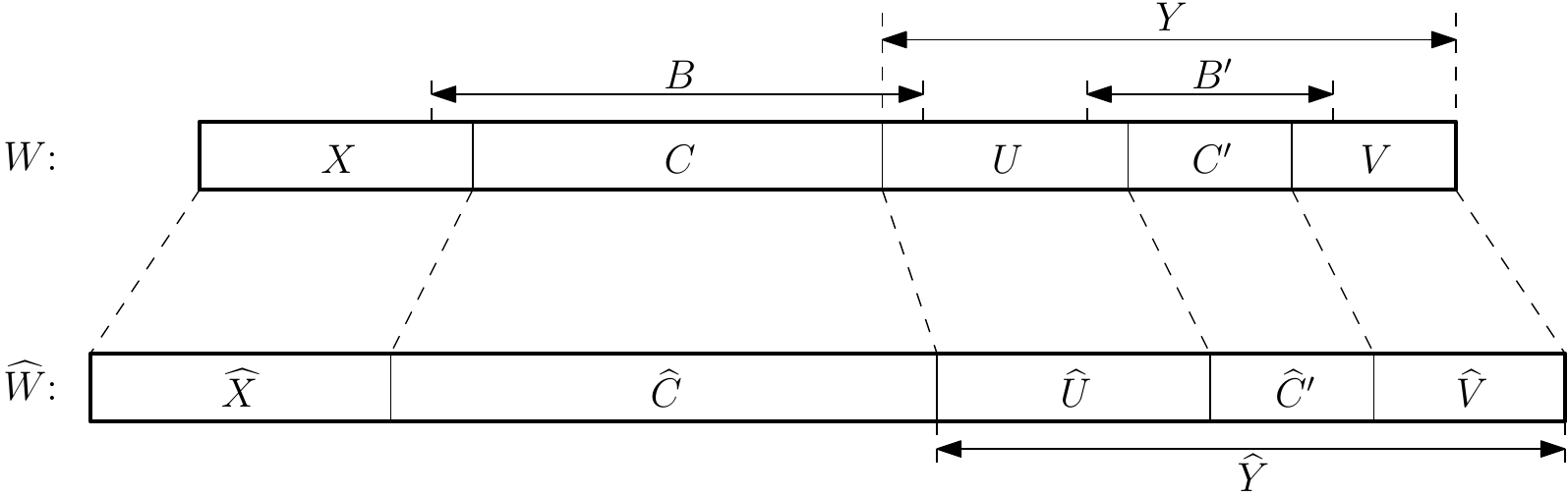}
\caption{Illustration of the proof of Proposition~\ref{pro-poly}}\label{fig-pol}
\end{figure}

Let $X$ be the prefix of $W$ that contains all the symbols preceding $B$ and the first $2KQ+1$ 
symbols of~$B$. See Figure~\ref{fig-pol}. Let $Y$ be the suffix of $W$ containing the rightmost 
$2KQ+1$ symbols of~$B$ 
and all the symbols to the right of~$B$. Let $C$ be the sequence of symbols of $B$ that are 
neither in $X$ nor in~$Y$. Note that for $n$ large enough, the slice $W_j$ is entirely contained in 
$C$, and in particular $C$ is nonempty and we may write~$W=XCY$.

Let $d_X$ be the number of dominant loop blocks in the prefix decomposition of $X$, and let $d_Y$ be 
the number of dominant loop blocks in the suffix decomposition of~$Y$. By typicality, we have 
$d_X+d_Y=D+1$. Since $B$ is not the rightmost dominant prefix loop block, we get $d_X\le D-1$, 
and hence $d_Y\ge 2$. Let $B'$ be the leftmost dominant suffix loop block of $Y$ which is 
disjoint from~$B$. Let $C'$ be the subword of $B'$ consisting of its central elements. We may 
then write $Y$ as $Y=UC'V$, with $U$ and $V$ being the symbols of $Y$ before and after $C'$, 
respectively.

Let $m$ be the length of $C$ and $m'$ the length of $C'$. Note that $m\ge \weight(W_j)=\Theta(n)$ and 
$m'\ge 1$. Let us now fix a value $k\in[m]$, and let $W'$ be the word obtained from $W$ by removing 
$k$ symbols from the block $B$ and inserting these $k$ symbols into~$B'$ (necessarily all 
these $k$ symbols are copies of the symbol `$1$'). We will now show that $W$ is Wilf-equivalent to 
$W'$ in the class~$\C$, implying that $W$ belongs to a Wilf class of size~$\Omega(n)$. 

Let $\cont W\in\C$ be a word containing~$W$. Let $\cont X$ be the shortest prefix of $\cont W$ 
containing $X$, let $\cont Y$ be the shortest suffix of $\cont W$ containing~$Y$, and let $\cont C$ 
be the symbols of $\cont W$ between $\cont X$ and~$\cont Y$. Noting that all the dominant prefix 
loop blocks of $X$ and all the dominant suffix loop blocks of $Y$ are large, we may apply 
Lemma~\ref{lem-dw} to conclude that $\cont X$ has at least $d_X$ dominant prefix loop blocks, and 
$\cont Y$ has at least $d_Y$ dominant suffix loop blocks. In fact, since $\cont X\cont Y$ is in $\C$ 
and $d_X+d_Y=D+1$, we conclude by Lemma~\ref{lem-xydom} that $\cont X$ has exactly $d_X$ dominant 
prefix loop blocks and $\cont Y$ exactly $d_Y$ dominant suffix loop blocks. By the second part of 
Lemma~\ref{lem-dw}, in any embedding of $Y$ into $\cont Y$, all the central symbols of the dominant 
loop blocks, and in particular all the symbols of $C'$, get mapped to loop symbols. Moreover, 
all the symbols of $\cont C$ are loop symbols in the prefix decomposition of $\cont W$, since if 
$\cont C$ contained a transition symbol, then $\cont W$ would have more dominant prefix loop blocks 
than $W$, which is impossible. 

It follows that $\cont W$ can be written as $\cont W=\cont X\cont C\cont Y$, where $\cont C$ is a 
sequence of length at least $m$ in which all symbols are equal to~$1$. Moreover, $\cont Y$ can be 
further written as $\cont Y=\cont U\cont C'\cont V$, where $\cont V$ is the shortest suffix of 
$\cont Y$ that contains $V$, $\cont C'$ is a sequence of length $m'$ whose all symbols are equal to 
$1$, and $\cont U$ contains~$U$. We may now transform $\cont W$ into a word $\cont W'$ by moving 
$k$ symbols from $\cont C$ to~$\cont C'$. Then $\cont W'$ belongs to $\C$, since it only differs 
from $\cont W$ by the length of its loop blocks, $\cont W'$ clearly contains $W'$, and we easily see 
that the map $\cont W\mapsto \cont W'$ is a weight-preserving bijection between the words of $\C$ 
containing $W$ and those containing~$W'$. 

This shows that $W$ and $W'$ are indeed equivalent in~$\C$, and the Wilf class of $W$ has 
size at least $m=\Theta(n)$. It follows that $\C$ exhibits a Wilf collapse.
\end{proof}

Propositions~\ref{pro-exp} and \ref{pro-poly} together establish Theorem~\ref{thm-finind}.

%\todo[inline]{In fact the argument that works for the finitely many sum-indecomposables case might 
% extend (sometimes) to the supercritical case. In any finitely based subclass there is an equivalence 
% relation on permutations which identifies two permutations that contain exactly the same set of 
% factors of the basis elements. This relation has finitely many equivalence classes so it suffices to 
% check that if we consider any one of these classes, there is a Wilf collapse among its members. The 
% ``alphabet'' in question is the set of equivalence classes of the basis elements. Now there is an 
% underlying FSA identifying the class (since knowing which factors occurs tells us how far we advance 
% in chewing up basis elements). Suppose that all of its maximal loop alphabets are supercritical. 
% Then ... If this works it would be nice to prove/conjecture that supercriticality on its own is 
% sufficient for WC in sum-closed classes.}

\section{Concluding remarks}
\label{sec-concluding}

We have demonstrated that Wilf collapse occurs in a wide variety of permutation classes. As 
mentioned in the introduction, the only ingredients we seem to need to trigger such a collapse are 
a form of greedy embedding for detecting permutation involvement, together with a representation in 
terms of words that combines with the greedy embedding to allow for local symmetries 
that guarantee Wilf equivalence.

A notable example where our methods of establishing Wilf collapse fail is $\Av(321)$ -- the 
class of permutations containing no occurrence of a 321 pattern. In \cite{Guillemot} (see also 
\cite{AlbertLackner}) a greedy approach to detecting involvement is described in this class but the 
complexity of the ways in which sum-indecomposable permutations can be combined here (along perhaps 
with the failure of super-criticality) have stymied our attempts to prove a Wilf collapse in 
$\Av(321)$. Furthermore, empirical evidence for this class suggests that if a collapse does occur it 
is far less ``robust'' than we see in our other examples -- the largest observed Wilf classes are 
those containing the permutations of the form $(d+1)(d+2) \cdots n \, 12 \cdots d$ (and some others) 
previously considered in \cites{MansourV, Mansour}.  

A related permutation class, the class of ``skew-merged'' permutations (permutations that can be 
written as the merge of a decreasing and an increasing subsequence) has none of the nice closure 
properties that we might hope for, but again the existence of a greedy algorithm for pattern 
detection and an underlying structure of ``spirals'' might yield a Wilf collapse. 

\begin{question}
Does $\Av(321)$ have a Wilf collapse? Does the class of skew-merged permutations?
\end{question}

% Noting the similarities of the methods of \cite{Bouvel} to our approach in fully closed classes it 
% is also tempting to ask whether Wilf collapses occur in substitution closed classes whose 
% simple permutations are somehow restricted.
% 
% \begin{question}
% If $\S$ is a substitution-closed class having only finitely many simple permutations, does $\S$ 
% have an exponential Wilf collapse? In particular what about the substitution closure of $\{2413, 
% 3142\}$?
% \end{question}

% Finally, it seems natural to ask whether a Wilf collapse occurs in any unbounded class with only 
% finitely many fully indecomposable elements. Such a result would generalise
% Theorem~\ref{thm-finind}, and would also extend the scope of Corollary~\ref{cor-sumandskew} (while 
% weakening the conclusion to only guarantee non-exponential Wilf collapse). Even for classes 
% whose only fully indecomposable permutation was 1 (i.e., subclasses of the the separable 
% permutations) such a result would provide a uniform proof of Wilf collapse in all the classes 
% considered in~\cites{AlbertBouvel, AlbertLi}.

\bibliographystyle{plain}
\bibliography{collapse}

\end{document}